\newtheorem{theorem}{Theorem}[section]
\newtheorem{corollary}[theorem]{Corollary}
\newtheorem{definition}[theorem]{Definition}
\newcommand{\ie}{i.~\!e.\ }
\newcommand{\f}{\varphi}
\newcommand{\vt}{\vartheta}
\newcommand{\g}{\tilde{g}}
\newcommand{\bg}{\bar{g}}
\newcommand{\tbg}{\tilde{\bar{g}}}
\newcommand{\n}{\nabla}
\newcommand{\M}{(M,\A\f,\A\xi,\A\eta,\A{}g)}
\newcommand{\I}{\iota}
\newcommand{\R}{\mathbb R}
\newcommand{\F}{\mathcal{F}}
\newcommand{\LL}{\mathcal{L}}
\newcommand{\bvt}{\bar\vartheta}
\newcommand{\om}{\omega}
\newcommand{\sm}{\sigma}
\newcommand{\bsm}{\bar\sigma}
\newcommand{\ta}{\theta}
\newcommand{\lm}{\lambda}
\newcommand{\gm}{\gamma}
\newcommand{\dt}{\delta}
\newcommand{\al}{\alpha}
\newcommand{\bt}{\beta}
\newcommand{\bs}{(\bg,\A\bvt,\A\bsm)}
\DeclareMathOperator{\D}{d\!} 
\newcommand{\ddx}[1]{\dfrac{\partial}{\partial x^{#1}}}
\newcommand{\ddy}[1]{\dfrac{\partial}{\partial y^{#1}}}
\newcommand{\ddt}{\dfrac{\partial}{\partial t}}
\DeclareMathOperator{\grad}{grad} 
\DeclareMathOperator{\tr}{tr} 
\DeclareMathOperator{\Span}{span} 
\newcommand{\thmref}[1]{Theorem~\ref{#1}}
\newcommand{\corref}[1]{Corollary~\ref{#1}}
\newcommand{\A}{\allowbreak{}}
\begin{document}

\title[Almost Riemann Solitons with Vertical Potential on  ...]
{Almost Riemann Solitons with Vertical Potential on Conformal Cosymplectic Contact Complex Riemannian Manifolds}

\author{Mancho Manev}

\address[M. Manev]{
University of Plovdiv Paisii Hilendarski,
Faculty of Mathematics and Informatics,
Department of Algebra and Geometry,
24 Tzar Asen St.,
Plovdiv 4000, Bulgaria
\&
Medical University -- Plovdiv,
Faculty of Pharmacy,
Department of Medical Physics and Biophysics,
15A Vasil Aprilov Blvd,
Plovdiv 4002, Bulgaria}
\email{mmanev@uni-plovdiv.bg}

\dedicatory{Dedicated to the memory of the author's irreplaceable teacher, colleague and friend Prof. Kostadin Gribachev (1938--2022).}

\begin{abstract}
Almost Riemann solitons are introduced and studied on an almost contact complex Rie\-mannian manifold,
\ie an almost contact B-metric manifold, obtained from a cosymplectic manifold of the considered type by a contact conformal transformation of the Reeb vector field, its dual contact 1-form, the B-metric, and its associated B-metric.
The potential of the studied soliton is assumed to be in the vertical distribution,
\ie it is collinear to the Reeb vector field.
In this way, manifolds from the four main classes of the studied manifolds are obtained.
Curvature properties of the resulting manifolds are derived. An explicit example of dimension five is constructed.
The Bochner curvature tensor is used (for dimension at least seven) as a conformal invariant to get properties and construct an explicit example in relation to the obtained results.
\end{abstract}

\keywords{
almost Riemann soliton; contact conformal transformation; almost Einstein-like manifold; almost $\eta$-Ein\-stein manifold; almost contact B-metric manifold; almost contact complex Riemannian manifold.
}

\subjclass[2010]{
Primary
53C25, 
53D15,  	
53C50; 
Secondary
53C44,  	
53D35, 
70G45} 

\maketitle


\section{Introduction}

The Riemann flow was introduced by C.\ Udri\c{s}te in \cite{Udr10,Udr12} and it is the flow associated with the evolution
equation
\[
\frac{\partial}{\partial t}\left(g\owedge g\right)(t)=-4R (g(t)),
\]
where $R$ is the Riemann curvature tensor of type $(0, 4)$ corresponding to the metric $g$ at time $t$
and $\owedge$
stands for the Kulkarni--Nomizu product of two symmetric tensors of type $(0,2)$;
e.g.\ this product has the following form for order-2 covariant tensors $g$ and $h$
\begin{equation}\label{KNp}
\begin{array}{l}
\left(g\owedge h\right)(x,y,z,w)
=g(y,z) h(x,w) - g(x,z)h(y,w)\\[4pt]
\phantom{g\owedge h(x,y,z,w)=h}
+h(y,z) g(x,w) - h(x,z) g(y,w).
\end{array}
\end{equation}
Here and further $x$, $y$, $z$, $w$ stand for arbitrary vector fields on a smooth manifold $M$.

Riemann solitons are introduced by I.-E.\ Hiric\u{a} and C.\ Udri\c{s}te in \cite{HirUdr16}. They are critical metrics for Riemann flow as they are self-similar solutions of its evolution equation,
\ie it evolves over time from a given Riemannian metric on $M$ by diffeomorphisms and
dilatations.

A Riemannian metric $g$ on a smooth manifold $M$ is said to be a
\emph{Riemann soliton} if there exists a differentiable vector field $\vt$ and a real constant $\sm$ such that \cite{HirUdr16}
\begin{equation*}\label{RiemSol}
    2R + \sm g \owedge g + g \owedge \LL_\vt g = 0,
\end{equation*}
where $\LL_\vt$ is the Lie derivative along $\vt$.
Such a vector field $\vt$ is known as the \emph{potential of the soliton}.
In the case when $\sm$ is a differentiable function on $M$, then $g$ is called an \emph{almost Riemann soliton}.
If $\vt$ is Killing, \ie $\LL_\vt g=0$, then $M$ is a manifold of constant sectional curvature. In this sense, the Riemann soliton is
a generalization of space of constant curvature.
%

In the beginning \cite{HirUdr16}, the notion of Riemann soliton was studied in the
context of Sasakian geometry and it was named as Sasaki-Riemann soliton.


In recent years, some interesting results have been obtained for Riemann solitons and almost Riemann solitons on almost contact metric manifolds.
In \cite{VenKumDev20,DevKumVen21}, Venkatesha, Devaraja and Kumara study the cases of almost Kenmotsu
manifolds and K-contact manifolds.
Biswas, Chen and U. C. De characterize almost co-K\"ahler manifolds whose metrics are
Riemann solitons in \cite{BiChDe22}.
K. De and U. C. De prove in \cite{DeDe22} some geometric properties of almost Riemann solitons on non-cosymplectic normal almost contact metric manifolds and in particular on quasi-Sasakian 3-dimensional manifolds.
In \cite{ChiVen21}, Chidananda and Venkatesha study Riemann solitons
on non-Sasakian $(\kappa,\mu)$-contact manifolds in relation with the $\eta$-Einstein property, where the potential is an infinitesimal contact transformation or collinear to the Reeb vector field.

A.-M. Blaga contributes to the study of Riemann and almost Riemann soliton
with \cite{Bl20a} for Riemannian manifolds together with La\c{t}cu
and
with \cite{Bl21} for $(\al,\bt)$-contact metric manifolds.
In the latter case, compact Riemann solitons with constant-length potential are shown to be trivial.
This result was extended by  Tokura, Barboza, Batista, and Menezes
in \cite{ToBaBaMe22}  without additional conditions on the potential.

$\mathcal{D}$-homothetic deformations are introduced by S.\ Tanno \cite{Tanno68}
in almost contact metric geometry, where $\mathcal{D}$ denotes the contact distribution.
These transformations 
preserve the property of a structure to be K-contact or Sasakian.
In \cite{Bl22}, Blaga study almost Riemann solitons on a $\mathcal{D}$-homothetically deformed Kenmotsu manifold with different conditions on the potential and explicitly obtain Ricci and scalar curvatures for some cases.


An \emph{almost contact complex Riemannian} (or accR for short) manifold is an odd-dimension\-al pseudo-Riemannian manifold $M$ equipped with
a B-metric $g$ and
an almost contact structure $(\f,\xi,\eta)$ and therefore $M$ has a codimension-one distribution $\mathcal{H} = \ker \eta$ equipped with a complex Riemannian structure. These manifolds are also known as \emph{almost contact B-metric manifolds} \cite{GaMiGr}.

What mainly distinguishes an accR structure from the better-known almost contact metric structure is the presence of another metric of the same type associated with the given metric.
Both B-metrics have neutral signature on $\mathcal{H}$ and the restriction of $\f$ on $\mathcal{H}$  (actually, an almost complex structure) acts as an anti-isometry on the metric.
Manifolds of this type have been studied and investigated for example in
\cite{GaMiGr},
\cite{HM}--\cite{NakGri2}.

The aim of this paper is to investigate the interaction between almost Riemann solitons and the accR structure.
One way to realize this goal
is to use conformal transformations of the accR structure.
Contact conformal transformations of B-metric were introduced and initially studied in \cite{ManGri1} and \cite{ManGri2} by K. Gribachev and the author.
The metric deformation depends on both the two B-metrics and their restriction of the vertical distribution determined by $\xi$.
A generalization of these transformations and the $\mathcal{D}$-homothetic deformations of the accR structure (introduced in \cite{Bulut19}) that uses a triplet of functions on the manifold are the following transformations.
Contact conformal transformations of a general type that transform not only the B-metrics but also $\xi$ and $\eta$ are studied in \cite{Man4}.
According to this work, the class of accR manifolds,
which are closed under the action of these transformations, is the direct sum
of the four main classes among the eleven basic classes of these manifolds known from
the classification of Ganchev--Mihova--Gribachev in \cite{GaMiGr}.
Main classes are called those for
which their manifolds are characterized by the fact that the covariant derivative of the
structure tensors with respect to the Levi-Civita connection of any of the B-metrics is
expressed only by a pair of B-metrics and the corresponding traces.

The present paper is organized as follows. After the present introductory words,
Section 2 recalls the basic concepts of accR manifolds and contact
conformal transformations of the structure tensors on them.
Section 3 introduces the
notion of almost Riemann soliton with vertical potential on a transformed accR manifold and show condition that implies flatness of the  manifold.
Section 4 presents the curvature properties of contact conformal accR manifolds that are transformed from such manifolds of cosymplectic type and admit the studied soliton.
Section 5 is devoted to the particular case of the situation in the previous section when the transformed manifold is also of cosymplectic type.
The last two sections provide explicit examples for the studied manifolds in relation with the obtained results.

\section{Almost Contact Complex Riemannian Manifolds and Their Contact Conformal Transformations}

We study \emph{almost contact complex Riemannian manifolds} or \emph{accR manifolds} for short,
also known as \emph{almost contact B-metric manifolds}.
 Such a manifold, denoted by $\M$, is a $(2n+1)$-dimensional differentiable manifold, which is equipped with an almost
contact structure $(\f,\xi,\eta)$ and B-metric $g$. This means that $\f$ is an endomorphism
of the tangent bundle $TM$, $\xi$ is a Reeb vector field and $\eta$ is its dual contact 1-form.
Moreover,
$g$ is a pseu\-do-Rie\-mannian
metric of signature $(n+1,n)$ satisfying the following
algebraic relations: \cite{GaMiGr}
\begin{equation}\label{strM}
\begin{array}{c}
\f\xi = 0,\qquad \f^2 = -\I + \eta \otimes \xi,\qquad
\eta\circ\f=0,\qquad \eta(\xi)=1,\\
g(\f x, \f y) = - g(x,y) + \eta(x)\eta(y),
\end{array}
\end{equation}
where $\I
$ is the identity transformation on the set $\Gamma(TM)$ of vector fields on $M$.

As consequences of \eqref{strM} are known the following equations
\begin{equation*}\label{strM2}
\begin{array}{ll}
g(\f x, y) = g(x,\f y),\qquad 
g(x, \xi) = \eta(x),\qquad
g(\xi, \xi) = 1,\qquad 
\eta(\n_x \xi) = 0,
\end{array}
\end{equation*}
where $\n$ is the Levi-Civita connection of $g$.

The investigated manifold $\M$ has another B-metric besides $g$.
This is the associated metric $\g$ of $g$ on $M$ defined by
\[
\g(x,y)=g(x,\f y)+\eta(x)\eta(y).
\]
Obviously, $\g$ as well as $g$ satisfies also the last condition in \eqref{strM} and has the same signature.

A classification of accR manifolds containing eleven basic classes $\F_1$, $\F_2$, $\dots$, $\F_{11}$ is given in
\cite{GaMiGr}. This classification is made with respect
to the tensor $F$ of type $(0,3)$ defined by
\begin{equation*}\label{F=nfi}
F(x,y,z)=g\bigl( \left( \nabla_x \f \right)y,z\bigr).
\end{equation*}
The following identities are valid:
\begin{equation*}\label{F-prop}
\begin{array}{l}
F(x,y,z)=F(x,z,y)
=F(x,\f y,\f z)+\eta(y)F(x,\xi,z)
+\eta(z)F(x,y,\xi),\\[4pt]
F(x,\f y, \xi)=(\n_x\eta)y=g(\n_x\xi,y).
\end{array}
\end{equation*}

The special class $\F_0$,
determined by the condition $F=0$, is the intersection of the basic classes and it is known as the
class of the \emph{cosymplectic accR manifolds}.
Sometimes, in the context of classification and for brevity, these manifolds are called $\F_0$-manifolds.

Let $\left\{e_i;\xi\right\}$ $(i=1,2,\dots,2n)$ be a basis of
$T_pM$ and let $\left(g^{ij}\right)$ be the inverse matrix of the
matrix $\left(g_{ij}\right)$ of $g$. Then the following 1-forms
are associated with $F$:
\begin{equation*}\label{t}
\theta=g^{ij}F(e_i,e_j,\cdot),\quad
\theta^*=g^{ij}F(e_i,\f e_j,\cdot), \quad \omega=F(\xi,\xi,\cdot).
\end{equation*}
These 1-forms are known also as the \emph{Lee forms} of the considered manifold. Obviously, the identities
$\om(\xi)=0$ and $\ta^*\circ\f=-\ta\circ\f^2$ are always valid.


In \cite{ManGri1}, it is introduced the so-called contact conformal transformation of the B-metric $g$. It maps $g$ into a new B-metric $\bar g$ using both the B-metrics. 
Later, in \cite{Man4}, this transformation is generalized as a contact conformal transformation that gives an accR structure $(\f,\bar\xi,\bar\eta,\bar g)$ as follows
\begin{equation}\label{cct}
\begin{array}{l}
\bar\xi=e^{-w}\xi,\qquad \bar\eta=e^{w}\eta,\\[4pt]
\bar g= e^{2u}\cos{2v}\, g + e^{2u}\sin{2v}\, \g + \left(e^{2w}-e^{2u}\cos{2v}-e^{2u}\sin{2v}\right)\eta\otimes\eta,
\end{array}
\end{equation}
where $u, v, w$ are differentiable functions on $M$. The group of these transformations is denoted by $G$ and briefly we call each of the elements of $G$ a \emph{$G$-transformation}.

Let us note that the $G$-transformations of $(\f,\xi,\eta,g)$ are a generalization of the 
\emph{$\mathcal{D}$-homo\-thetic deformations}, where $\mathcal{D}$ denotes the contact distribution $\ker\eta$. Namely, for a positive constant $t$, a $\mathcal{D}$-homothetic deformation is defined by \cite{Bulut19} (see also \cite[p.~125]{Blair76} for the metric case)
\[
\bar \xi = \lm^{-1}\xi,\qquad
\bar \eta = \lm\,\eta,\qquad
\bg = -\lm\,g+\lm(\lm+1)\eta\otimes\eta.
\]
It is clear that $\mathcal{D}$-homothetic deformation is a $G$-transformation of the accR structure $(\f,\A\xi,\A\eta,\A g)$ for constants $u=\frac12 \ln \lm$, $v=\frac{\pi}{2}$ and $w=\ln \lm$.

The structure $(\f,\xi,\eta,g)$ determines two mutually orthogonal distributions with respect to $g$.
They are the horizontal (contact) distribution $\mathcal{H}=\ker\eta$ and the vertical distribution $\mathcal{V}=\Span\xi$. They coincide with the respective distributions for the structure $(\f,\bar\xi,\bar\eta,\bar g)$, \ie $\mathcal{H}=\bar{\mathcal{H}}=\ker\bar\eta$ and $\mathcal{V}=\bar{\mathcal{V}}=\Span\bar\xi$, due to the equalities in the first line of \eqref{cct}.

The corresponding tensors $F$ and $\bar F$ for the accR structures $(\f,\xi,\eta,g)$ and $(\f,\bar\xi,\bar\eta,\bar g)$ 
 by a $G$-transformation \eqref{cct} are related as follows (e.g.\ \cite{Man4}, see also \cite[(22)]{ManIv38})
\begin{equation}\label{ff}
\begin{aligned}
    2\bar{F}&(x,y,z)= 2e^{2u}\cos{2v}\, F(x,y,z)
    +e^{2u}\sin{2v} \left[P(x,y,z)+P(x,z,y)\right]\\[4pt]
    &+(e^{2w}-e^{2u}\cos{2v})\left[Q(x,y,z)+Q(x,z,y)+Q(y,z,x)+Q(z,y,x)\right]\\[4pt]
    &-2e^{2u}\left[\gm(z)g(\f x,\f y)+\dt(z)g(x,\f y)
    +\gm(y)g(\f x,\f z)+\dt(y)g(x,\f z)\right]
\\[4pt]
    &+2e^{2w}\eta(x)\left[\eta(y)\D w(\f z)+\eta(z)\D w(\f
    y)\right],
\end{aligned}
\end{equation}
where for  brevity we use the notations
\begin{equation*}\label{PQ}
\begin{array}{l}
P(x,y,z)=F(\f y,z,x)-F(y,\f z,x)+F(x,\f y,\xi)\eta(z), \\[4pt]
Q(x,y,z)=\left[F(x,y,\xi)+F(\f y,\f x,\xi)\right]\eta(z), \\[4pt]
\gm(z)= \cos{2v}\,\al(z)+\sin{2v}\,\bt(z),\qquad
\dt(z)= \cos{2v}\,\bt(z)-\sin{2v}\,\al(z),
\end{array}
\end{equation*}
\begin{equation}\label{albt}
\al = \D u\circ \f + \D v, \qquad \bt = \D u - \D v\circ \f.
\end{equation}


In the general case, the relations between the Lee forms of the corresponding manifolds $\M$ and $(M,\f,\bar\xi,\bar\eta,\bar g)$ are the following (see \cite{Man4})
\begin{equation}\label{ttbartt}
    \bar{\ta} = \ta+2n\, \al,\qquad
    \bar{\ta}^* = \ta^* +2n\,\bt,\qquad
    \bar{\om}  = \om +\D w\circ \f.
\end{equation}

As proved in \cite[Theorem 4.2, p.~62]{ManA}, the class of accR manifolds that is preserved by $G$-trans\-formations is the direct sum of all main classes $\F_1\oplus\F_4\oplus\F_5\oplus\F_{11}$, denoted here briefly as $G(\F_0)$.
The main classes are the only classes of accR manifolds in the Gan\-chev--Mihova--Gribachev classification, where $F$ is expressed only by the metric $(0,2)$-tensors $g$, $\g$, $\eta\otimes\eta$ and the Lee forms.
The class $G(\F_0)$ obviously  contains $\F_0$.

\section{Almost Riemann Solitons with Vertical Potential on Contact Conformal accR Manifolds}

\begin{definition}
One says that the B-metric $\bg$ generates a \emph{Riemann soliton} with potential $\bvt$ and constant $\bsm$, denoted $(\bg,\bvt,\bsm)$, on an accR manifold $(M,\f,\bar\xi,\bar\eta,\bg)$, if the following condition is satisfied
\begin{equation}\label{RS}
    2\bar R + \bsm\, \bg \owedge \bg + \bg \owedge \LL_{\bvt} \bg = 0,
\end{equation}
where $\bar R$ is the Riemannian curvature tensor of $(M,\f,\bar\xi,\bar\eta,\bg)$ for $\bg$.
If $\bsm$ is a differentiable function on $M$, then the generated soliton is called \emph{almost Riemann soliton} $(\bg,\bvt,\bsm)$ on $(M,\f,\bar\xi,\bar\eta,\bg)$.
\end{definition}

In this work, we consider the case when the potential $\bvt$ is a vertical vector field, \ie  $\bvt$ is collinear to $\bar\xi$. Then we have the expression $\bvt=\bar k\,\bar\xi$ for a differentiable function $\bar k$ on the manifold. Obviously, the equality $\bar k=\bar\eta(\bvt)$ holds.
We require that the potential $\bvt$ does not degenerate at any point on the manifold $(M,\f,\bar\xi,\bar\eta,\bg)$. This means that $\bar k$ does not vanish anywhere, \ie  $\bar k\neq 0$.

It is well known the following expression of the Lie derivative in terms of the covariant derivative with respect to the Levi-Civita connection $\bar\n$ of 
$\bg$
\begin{equation*}\label{L1}
  \left(\LL_{\bar\xi} \bg\right)(x,y) = \bg\left(\bar\n_x \bar\xi, y\right)+\bg\left(x, \bar\n_y \bar\xi\right).
\end{equation*}
Similarly, the following formula can be obtained
\begin{equation}\label{Lbvt}
    \left(\LL_{\bvt} \bg\right)(x,y)=\bg \left(\bar\n_x\bvt,y\right)+\bg \left(x,\bar\n_y\bvt\right).
\end{equation}

For a vertical potential we have
$
\bar\n_x\bvt=\bar\n\left(\bar k\,\bar\xi\right)=\bar k\bar\n\bar\xi + \D \bar k (x)\bar\xi.
$
Then, the latter two equalities imply the formula
\begin{equation}\label{LL}
    \left(\LL_{\bvt} \bg\right)(x,y) =  \bar k \left(\LL_{\bar\xi} \bg\right)(x,y)+\bar h_1(x,y),
\end{equation}
where we use the following notation
\begin{equation}\label{h1}
\bar h_1(x,y)=\D \bar k(x)\bar\eta(y) + \D \bar k(y)\bar\eta(x).
\end{equation}
Obviously, the $(0,2)$-tensor $\bar h_1$ is symmetric and has the properties
\[
\bar h_1(\f x,\f y)=0, \qquad \bar h_1(\bar\xi,\bar\xi)=\tr{\bar h_1} =2\D \bar k(\bar\xi).
\]
Therefore, it vanishes on $\mathcal{H}$.
Furthermore, $\bar h_1$ vanishes if and only if $\bar k$ is a constant.


The following theorem holds for an arbitrary $\F_0$-manifold $M$.
It is not necessary to assume that the structure of $M$ is obtained by some $G$-transformation.

\begin{theorem}\label{thm:aRs-F0}
Every $\F_0$-manifold 
admitting an almost Riemann soliton
with vertical potential is flat.
\end{theorem}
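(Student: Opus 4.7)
The plan rests on the fact that on an $\F_0$-manifold the condition $F=0$ means $\n\f=0$, whence also $\n\xi=0$ and $\n\eta=0$ (using $\f\xi=0$ and $\eta(\n_x\xi)=0$). Two consequences drive the argument: $\xi$ is Killing, so $\LL_\xi g=0$; and the Riemann tensor satisfies $R(x,y,z,\xi)=0$ for all $x,y,z$, because $R(x,y)\xi=0$. Writing the vertical potential as $\vt = k\,\xi$ with $k\neq 0$, formulas \eqref{LL}--\eqref{h1} applied to $(g,\vt,\sm)$ reduce the almost Riemann soliton condition to
\[
2R + \sm\, g\owedge g + g\owedge h_1 = 0,\qquad h_1(x,y) = \D k(x)\,\eta(y) + \D k(y)\,\eta(x).
\]

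I would then specialize this identity on carefully chosen arguments. Plugging $(x,y,z,\xi)$ with $x,y,z$ horizontal kills the $R$- and $g\owedge g$-contributions and leaves $g(y,z)\,\D k(x) = g(x,z)\,\D k(y)$; since $g\vert_{\mathcal{H}}$ is non-degenerate and $\dim\mathcal{H}=2n\ge 2$, this forces $\D k\vert_{\mathcal{H}}=0$, so $\D k = \lm\,\eta$ with $\lm:=\xi(k)$. A second specialization at $(x,\xi,z,\xi)$ with $x$ horizontal yields $\sm = -\lm$. Substituting $h_1 = 2\lm\,\eta\otimes\eta$ and $\sm=-\lm$ back into the soliton equation compresses it to the closed form
\[
R = \tfrac{\lm}{2}\, g\owedge\bigl(g - 2\,\eta\otimes\eta\bigr).
\]

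To finish, I would exploit the K\"ahler--Norden identity inherent to $\F_0$. From $\n\f=0$ one has $R(x,y)\f z=\f R(x,y)z$, and combined with the compatibility in \eqref{strM} and $R(x,y,z,\xi)=0$ this yields the $\f$-antiinvariance $R(x,y,\f z,\f w) = -R(x,y,z,w)$. Together with the antisymmetry of $R$ in the last two slots, this forces $R(x,\f x,x,\f x)=0$ for every horizontal $x$. On the other hand, inserting $(x,\f x,x,\f x)$ into the displayed closed form and using $g(\f x,\f x)=-g(x,x)$ gives $R(x,\f x,x,\f x) = \lm\bigl[g(x,x)^2+g(x,\f x)^2\bigr]$. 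Since $g\vert_{\mathcal{H}}$ has neutral signature $(n,n)$, there exist non-null horizontal vectors, so the bracket is strictly positive and $\lm\equiv 0$, hence $R\equiv 0$ and $M$ is flat. The main obstacle I anticipate is precisely this last step: although the indefinite B-metric normally makes constant-curvature conclusions delicate, one must recognize that the $\f$-antiinvariance produces a \emph{sum} of two squares rather than a signed indefinite combination, so that a single non-null horizontal test vector suffices to kill $\lm$ pointwise.
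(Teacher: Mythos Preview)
Your argument is correct and takes a genuinely different route from the paper. The paper proceeds by tracing the soliton identity to obtain $\rho$ and the scalar curvatures $\tau$, $\tilde\tau$; it then invokes the K\"ahler property \eqref{K-F0} both to see $\rho(\xi,\xi)=0$ (giving $\D k(\xi)=-\sm$) and to rewrite $R$ via $g^*=g(\cdot,\f\cdot)$, producing a second expression for $\tau$; comparing the two values of $\tau$ forces $\sm=\D k(\xi)=0$, and back-substitution then gives $h_1=0$ and $R=0$. You instead work pointwise on the full curvature tensor: the vanishing $R(\cdot,\cdot,\cdot,\xi)=0$ (an immediate consequence of $\n\xi=0$ together with the pair symmetry of $R$) lets you read off $\D k|_{\mathcal H}=0$ and $\sm=-\xi(k)$ by evaluating the soliton equation on well-chosen arguments, and a single application of the $\f$-antiinvariance on one $\f$-holomorphic plane kills $\lm$. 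Your approach is more elementary in that no Ricci tensor or scalar curvature is ever computed, and it isolates exactly the one step where a non-null horizontal vector is needed, making transparent why the indefinite signature causes no trouble. The paper's trace-based route, by contrast, is closer in spirit to the Ricci/scalar-curvature computations that drive the later sections on $G(\F_0)$-manifolds.
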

\begin{proof}
Let us consider an $\F_0$-manifold $\M$ admitting an almost Riemann soliton $(g,\A\vt,\A\sm)$ with vertical potential $\vt=k\,\xi$.
Then its curvature tensor for $g$ has the following form, similar to \eqref{RS}
\begin{equation*}\label{R-RS}
R = \frac12 \sm\, g \owedge g -\frac12 g \owedge \LL_{\vt} g.
\end{equation*}
Since $\n\xi$ vanishes on an $\F_0$-manifold, then we have $\n_x \vt=\n_x(k\xi)=\D{k}(x)\xi$.
Due to the equality
$\left(\LL_{\vt} g\right)(x,y)=g \left(\n_x\vt,y\right)+g \left(x,\n_y\vt\right)$,
which is the analogue of  \eqref{Lbvt} on $\M$, we get in this case that
$
    \LL_{\vt} g=h_1,
$ 
where we use the following notation similar to \eqref{h1}
\begin{equation}\label{h1-}
h_1(x,y)=\D k(x)\eta(y) + \D k(y)\eta(x).
\end{equation}

Thus, the curvature tensor of such a manifold $\M$ takes the form
\begin{equation}\label{R-RS=F0}
R = -\frac12 \sm\, g \owedge g -\frac12 g \owedge h_1.
\end{equation}

Using \eqref{h1-} 
and \eqref{R-RS=F0}, we get the Ricci tensor and scalar curvatures for $g$ and $\g$, respectively, given in the following expressions
\begin{gather}\label{ro-F0}
\rho=-\left\{2n\,\sm+\D k(\xi)\right\}g - \frac12(2n-1)h_1,
\\[4pt]
\label{tau-F0}
\tau=-2n\left\{(2n+1)\sm+2\D k(\xi)\right\},\qquad \tilde{\tau}=0.
\end{gather}

The Riemannian curvature tensor $R$ of an $\F_0$-manifold has the K\"ahler property
\begin{equation}\label{K-F0}
R(x,y,\f z,\f w)=-R(x,y,z,w)
\end{equation}
since $\f$, $\xi$ and $\eta$ are covariant constant on $M$ with respect to $\n$ \cite{ManGri2}.
As consequences of \eqref{K-F0} and \eqref{h1-} we have $\rho(\xi,\xi)=0$ and $h_1(\xi,\xi)=2\D{k}(\xi)$, respectively, which together with \eqref{ro-F0} imply
\begin{equation}\label{dksm}
\D{k}(\xi)=-\sm.
\end{equation}

On the other hand, by virtue of \eqref{R-RS=F0} and \eqref{KNp}, we obtain
\begin{equation}\label{Rff}
R(x,y,\f z,\f w)=-\frac12 \sm \left(g^* \owedge g^*\right) (x,y,z,w) -\frac12 \left(g^* \owedge h_1^*\right)(x,y,z,w),
\end{equation}
where we use the notations $g^*=g(\cdot,\f\cdot)$, $h_1^*=h_1(\cdot,\f\cdot)$.
Then, taking into account that $g^*$ and $h_1^*$ are traceless due to the properties of $\f$, equalities \eqref{K-F0} and \eqref{Rff} yield consequently
\begin{equation}\label{Rrt}
R=-\frac12 \sm\, g^* \owedge g^* +\frac12 g^* \owedge h_1^*,\qquad
\rho=\sm\left[g-\eta\otimes\eta\right],\qquad \tau=2n\sm,\qquad \tilde\tau=0.
\end{equation}
Comparing the values of $\tau$ in \eqref{Rrt} and \eqref{tau-F0}, we obtain $\D{k}(\xi)=-(n+1)\sm$, which due to \eqref{dksm} gives $\D{k}(\xi)=\sm=0$.
Therefore, from \eqref{Rrt} it follows $\rho=0$, which together with
\eqref{ro-F0} imply $h_1=0$ and then $R=0$, bearing in mind \eqref{R-RS=F0}.
\end{proof}


\section{
$G(\F_0)$-Manifolds Admitting
the Studied Solitons}

In this section, we consider $\M$ as an $\F_0$-manifold, \ie $F=0$.
Let the resulting accR manifold $(M,\f,\bar\xi,\bar\eta,\bg)$ by $G$-transformation be called
a \emph{$G(\F_0)$-manifold}.
Then, the following expression follows from \eqref{ff} and gives the form of the fundamental tensor of $(M,\A\f,\A\bar\xi,\A\bar\eta,\bg)$:
\begin{equation*}\label{ff0}
\begin{aligned}
    2\bar{F}(x,y,z)&=
    -2e^{2u}\bigl\{\gm(z)g(\f x,\f y)+\dt(z)g(x,\f y)\bigr.
    +\gm(y)g(\f x,\f z)+\dt(y)g(x,\f z)\bigr\}
\\[4pt]
    &\phantom{=\ }
    +2e^{2w}\eta(x)\left\{\eta(y)\D w(\f z)+\eta(z)\D w(\f y)\right\}.
\end{aligned}
\end{equation*}
Then, using \eqref{albt} and \eqref{ttbartt}, the corresponding Lee forms are specialized as follows
\begin{equation*}\label{ttbartt-F0}
    \bar{\ta} = 2n \left\{\D u\circ \f + \D v\right\},\qquad
    \bar{\ta}^* = 2n\left\{\D u - \D v\circ \f\right\},\qquad
    \bar{\om}  = \D w\circ \f.
\end{equation*}

\begin{theorem}\label{thm:R-GF0-aRs}
A $G(\F_0)$-manifold  $(M,\f,\bar\xi,\bar\eta,\bg)$ admitting an almost Riemann soliton $\bs$ 
with vertical potential has a curvature tensor of the following form
\begin{equation}\label{bR}
\begin{array}{l}
  \bar R= -\left[\frac{\bar \sm}{2} +\bar k\D{u}(\bar\xi)\right]\bg\owedge \bg
    -\bar k\left\{\D{v}(\bar\xi)\,\bg\owedge\tilde\bg\right.
    \left.
    -\left[\D{u}(\bar\xi)+\D{v}(\bar\xi)\right]\bg\owedge\left(\bar\eta\otimes\bar\eta\right)\right\}\\[4pt]
    \phantom{\bar R=}
-\frac12\, \bg\owedge\bar h_1-\frac12\, \bar k\,\bg\owedge\bar h_2,
\end{array}
\end{equation}
where
\begin{equation}\label{h2}
\bar h_2 (x,y)=\bar\eta(x)\D w(\f^2 y)+\bar\eta(y)\D w(\f^2 x).
\end{equation}
\end{theorem}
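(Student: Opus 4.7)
The plan is to start from the almost Riemann soliton identity \eqref{RS} solved for $\bar R$, namely
\[
\bar R = -\tfrac{1}{2}\bsm\,\bg\owedge\bg - \tfrac{1}{2}\,\bg\owedge\LL_{\bvt}\bg,
\]
and then reduce the computation of $\LL_{\bvt}\bg$ to that of $\LL_{\bar\xi}\bg$. Since $\bvt=\bar k\,\bar\xi$ is vertical, identity \eqref{LL} gives $\LL_{\bvt}\bg=\bar k\LL_{\bar\xi}\bg+\bar h_1$ with $\bar h_1$ as in \eqref{h1}, so that the entire task is reduced to determining $\LL_{\bar\xi}\bg$ explicitly on a $G(\F_0)$-manifold.

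To achieve this, I would compute $\bar\n\bar\xi$ starting from $\n\xi=0$ (the defining property of an $\F_0$-manifold) together with $\bar\xi=e^{-w}\xi$ from \eqref{cct}. Using the $F=0$ specialization of the transformation formula \eqref{ff} and the relation $(\bar\n_x\bar\eta)y=\bg(\bar\n_x\bar\xi,y)=\bar F(x,\f y,\bar\xi)$, the covariant derivative $\bar\n_x\bar\xi$ is read off in terms of $\D u$, $\D v$, $\D w$ and the structure tensors. Symmetrizing through $(\LL_{\bar\xi}\bg)(x,y)=\bg(\bar\n_x\bar\xi,y)+\bg(x,\bar\n_y\bar\xi)$, I expect to arrive at the identity
\[
\tfrac{1}{2}\LL_{\bar\xi}\bg = \D u(\bar\xi)\,\bg + \D v(\bar\xi)\,\tbg - \left\{\D u(\bar\xi)+\D v(\bar\xi)\right\}\bar\eta\otimes\bar\eta + \tfrac{1}{2}\bar h_2,
\]
where $\bar h_2$ from \eqref{h2} collects the terms involving $\D w\circ\f^2$ along the Reeb direction, while the first two terms record the conformal and "associated-conformal" contributions of the deformation \eqref{cct} along $\bar\xi$.

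The concluding step is purely algebraic: one substitutes the above expression together with $\bar h_1$ back into the rewritten soliton equation and uses bilinearity of the Kulkarni--Nomizu product to separate the summands $\bg\owedge\bg$, $\bg\owedge\tbg$, $\bg\owedge(\bar\eta\otimes\bar\eta)$, $\bg\owedge\bar h_1$ and $\bg\owedge\bar h_2$. A term-by-term matching with the target formula then yields \eqref{bR}.

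The main obstacle is the derivation of a closed expression for $\bar\n\bar\xi$, equivalently for $\LL_{\bar\xi}\bg$, from \eqref{cct}. One must carefully track the $\cos 2v$ and $\sin 2v$ factors, handle the fact that the $G$-transformation intrinsically mixes the two B-metrics $g$ and $\g$, and verify that all purely horizontal contributions cancel so that only the projections $\bg$, $\tbg$, $\bar\eta\otimes\bar\eta$ and $\bar h_2$ survive after evaluating along the vertical direction. Once this identity is established, the rearrangement into \eqref{bR} is a short comparison of coefficients.
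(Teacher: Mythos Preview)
Your proposal is correct and follows essentially the same route as the paper: reduce $\bar R$ via \eqref{RS} and \eqref{LL} to the computation of $\LL_{\bar\xi}\bg$, establish the identity $\LL_{\bar\xi}\bg = 2\{\D u(\bar\xi)\bg + \D v(\bar\xi)\tbg - [\D u(\bar\xi)+\D v(\bar\xi)]\bar\eta\otimes\bar\eta\} + \bar h_2$, and then expand the Kulkarni--Nomizu products. The only cosmetic difference is that the paper quotes the formula for $\LL_{\bar\xi}\bg$ from \cite{Man73} in terms of $g$ and then rewrites it in barred tensors, whereas you propose to derive it directly from $\bar F(x,\f y,\bar\xi)=(\bar\n_x\bar\eta)y$ with $F=0$ in \eqref{ff}; both lead to the same intermediate identity and the same final rearrangement.
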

\begin{proof}
Bearing in mind \eqref{RS}, we have to determine $\LL_{\bvt} \bg$.
The expression of the Lie derivative of $\bg$ along $\bar\xi$ for a $G(\F_0)$-manifold is given in \cite{Man73} in the form:
\begin{equation}\label{Lxi0}
\begin{aligned}
    \left(\LL_{\bar\xi} \bg\right)(x,y) =& -2e^{2u-w}\left[\cos{2v}\,\D{u}(\xi)-\sin{2v}\,\D{v}(\xi)\right]g(\f x,\f y)
\\[4pt]
    & +2e^{2u-w}\left[\cos{2v}\,\D{v}(\xi)+\sin{2v}\,\D{u}(\xi)\right]g( x,\f y)
\\[4pt]
    &+e^{w}\left[\eta(x)\D w(\f^2 y)+\eta(y)\D w(\f^2 x)\right].
\end{aligned}
\end{equation}

Using the second line in \eqref{cct}, we derive the following formulas
\begin{equation*}\label{ggbargg}
\begin{array}{l}
    g(\f x,\f y) = e^{-2u}\left[\cos{2v}\,\bg(\f x,\f y)+\sin{2v}\,\bg(x,\f y)\right],
\\[4pt]
    g(x,\f y) = e^{-2u}\left[\cos{2v}\,\bg(x,\f y)-\sin{2v}\,\bg(\f x,\f y)\right],
\end{array}
\end{equation*}
which we apply in \eqref{Lxi0} together with the first line in \eqref{cct}. In this way we get
\begin{equation}\label{Lxi0=}
\begin{aligned}
    \left(\LL_{\bar\xi} \bg\right)(x,y) =& -2\left[\D{u}(\bar\xi) \bg(\f x,\f y) -\D{v}(\bar\xi)\bg(x,\f y)\right]
+\bar h_2 (x,y),
\end{aligned}
\end{equation}
where we introduce the notation \eqref{h2}.
Obviously, $\bar h_2$ is a symmetric $(0,2)$-tensor having the following properties
\[
\bar h_2(\f x,\f y)=\bar h_2(\bar\xi,\bar\xi)=\tr{\bar h_2}=0,\qquad
\bar h_2(x,y)=\bar h_2(x,\bar\xi)\bar\eta(y)+\bar\eta(x)\bar h_2(\bar\xi,y).
\]
Moreover, the formula $\bar h_2(x,\bar\xi)=\D{w}(\f^2x)$ is valid. It is easy to conclude that
$\bar h_2$ vanishes if and only if the function $w$ is constant on $\mathcal{H}$, \ie  $\D w\circ\f^2=0$.

The formula in \eqref{Lxi0=} can be rewritten in the form
\begin{equation*}\label{Lxi0==}
    \LL_{\bar\xi} \bg =
    2\left\{\D{u}(\bar\xi) \bg +\D{v}(\bar\xi)\tilde\bg-\left[\D{u}(\bar\xi)+\D{v}(\bar\xi)\right]\bar\eta\otimes\bar\eta\right\}
+\bar h_2.
\end{equation*}
Then we substitute the last equality in \eqref{LL} and get the following
\begin{equation*}\label{LL=}
    \LL_{\bvt} \bg =   2\bar k\left\{\D{u}(\bar\xi) \bg +\D{v}(\bar\xi)\tilde\bg-\left[\D{u}(\bar\xi)+\D{v}(\bar\xi)\right]\bar\eta\otimes\bar\eta\right\}
+\bar h_1+\bar k\bar h_2.
\end{equation*}

Using the Kulkarni-Nomizu product for $\bg$ and the last obtained Lie derivative, we obtain
\begin{equation}\label{KN-LL}
\begin{array}{l}
    \bg\owedge\LL_{\bvt} \bg =   2\bar k\left\{\D{u}(\bar\xi)\,\bg\owedge \bg
    +\D{v}(\bar\xi)\,\bg\owedge\tilde\bg
    -\left[\D{u}(\bar\xi)+\D{v}(\bar\xi)\right]\bg\owedge\left(\bar\eta\otimes\bar\eta\right)\right\}\\[4pt]
    \phantom{\bg\owedge\LL_{\bvt} \bg =}
+\bg\owedge\bar h_1+\bar k\,\bg\owedge\bar h_2.
\end{array}
\end{equation}
Then, according to \eqref{RS} and \eqref{KN-LL}, we establish the truthfulness of the statement.
\end{proof}

Taking the trace of \eqref{bR}, we obtain the expression of the Ricci tensor of the almost Riemann soliton satisfying the conditions of \thmref{thm:R-GF0-aRs} as follows
\begin{equation}\label{bro}
\begin{array}{l}
  \bar \rho = -\left[2n \,\bar \sm+\D\bar k(\bar\xi)+(4n-1) \bar k\D{u}(\bar\xi)\right]\bg\\[4pt]
    \phantom{\bar \rho =}
    -(2n-1)\bar k\left\{\D{v}(\bar\xi)\,\tilde\bg
    -\left[\D{u}(\bar\xi)+\D{v}(\bar\xi)\right]\,\bar\eta\otimes\bar\eta\right\}
    -\frac12(2n-1)\left[ \bar h_1+\bar k\,\bar h_2\right].
\end{array}
\end{equation}
Now, we take the trace of the Ricci tensor in \eqref{bro} to obtain the scalar curvature of $(M,\f,\A\bar\xi,\A\bar\eta,\A\bg)$ 
as follows
\begin{equation}\label{btau}
\begin{array}{l}
  \bar \tau = -2n\left[(2n+1)\bar \sm +2\D\bar k(\bar\xi)+4n \bar k \D{u}(\bar\xi)\right].
\end{array}
\end{equation}

After that we compute the associated quantity $\bar\tau^*$ of $\bar\tau$ defined by $\bar\tau^*=\bg^{ij}\f^k_j \bar\rho_{ik}$ and using \eqref{bro}, we obtain
\begin{equation}\label{btau*}
\begin{array}{l}
  \bar \tau^* = 2n(2n-1) \bar k \D{v}(\bar\xi).
\end{array}
\end{equation}

For every $\F_0$-manifold, the relation
 $\tilde{\tau}=-\tau^*$ is known from \cite{Man3}, where
 $\tilde{\tau}$ is the scalar curvature for $\tilde{g}$.
 But for $G(\F_0)$-manifolds, which are outside of $\F_0$, this is not true, so there we use the so-called
 \emph{$*$-scalar curvature}.

\begin{corollary}\label{cor:v}
A $G(\F_0)$-manifold $(M,\f,\bar\xi,\bar\eta,\bar g)$ with an almost Riemann soliton $(\bg,\bvt,\bsm)$ and a vertical potential  has vanishing $*$-scalar curvature if and only if the function $v$ is a vertical constant, \ie  $\D{v}(\bar\xi)=0$.
\end{corollary}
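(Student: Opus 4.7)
The plan is to read off the corollary directly from the identity
\[
\bar\tau^* = 2n(2n-1)\,\bar k\,\D{v}(\bar\xi)
\]
which is formula \eqref{btau*} established in the derivations following \thmref{thm:R-GF0-aRs}. Once this identity is at hand, the statement is a routine equivalence between a product being zero and the one interesting factor being zero, after ruling out the other factors as non-vanishing.

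The steps I would carry out are the following. First, I would note that the underlying manifold has dimension $2n+1\geq 3$, so $n\geq 1$ and the numerical coefficient $2n(2n-1)$ is a strictly positive integer; in particular it never vanishes. Second, I would invoke the standing hypothesis introduced right after the definition of almost Riemann soliton with vertical potential: the potential $\bvt=\bar k\,\bar\xi$ is assumed not to degenerate at any point of $M$, so $\bar k$ is nowhere zero. With both $2n(2n-1)$ and $\bar k$ pointwise invertible, dividing through in \eqref{btau*} gives the equivalence
\[
\bar\tau^* = 0 \quad\Longleftrightarrow\quad \D{v}(\bar\xi)=0,
\]
and the right-hand condition is precisely what it means for the function $v$ to be constant along the integral curves of $\bar\xi$, i.e.\ to be a vertical constant.

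I do not foresee a substantive obstacle here. All the computational effort — writing $\bar R$ in terms of $\bg$, $\tilde{\bg}$, $\bar\eta\otimes\bar\eta$, $\bar h_1$ and $\bar h_2$ as in \eqref{bR}, tracing to obtain $\bar\rho$ in \eqref{bro}, and then forming the $\f$-twisted trace $\bar\tau^* = \bg^{ij}\f^k_j\bar\rho_{ik}$ — has already been done to produce \eqref{btau*}. The only point genuinely worth flagging in the write-up is that the assumption $\bar k\neq 0$ is essential for the backward implication: without it one would merely have $\bar k\D{v}(\bar\xi)=0$, which does not isolate $v$. Hence the non-degeneracy of the vertical potential is exactly what converts the vanishing of $\bar\tau^*$ into a condition on $v$ alone.
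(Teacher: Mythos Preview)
Your proposal is correct and follows exactly the paper's own argument: the paper's proof simply says that the statement follows from \eqref{btau*} together with the condition that $\bar k$ does not vanish (to avoid degeneration of the potential $\bvt$). You have merely spelled out the trivial algebra more explicitly.
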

\begin{proof}
The statement follows from \eqref{btau*} and the condition that $\bar k$ is not identically zero,
otherwise it would lead to a degeneration of the potential $\bvt$.
\end{proof}


In \cite{Man62}, the notion of
an \emph{Einstein-like} accR manifold $\M$ is introduced by the following condition for its Ricci tensor
\begin{equation}\label{defEl}
\begin{array}{l}
\rho=a\,g +b\,\g +c\,\eta\otimes\eta,
\end{array}
\end{equation}
where $(a,b,c)$ is some triplet of constants.
In particular, when $b=0$ and $b=c=0$, the manifold is called an \emph{$\eta$-Einstein manifold} and an \emph{Einstein manifold}, respectively.
If $a$, $b$, $c$ are functions on $M$, then the manifold satisfying condition \eqref{defEl} is called \emph{almost Einstein-like}, and in particular for $b=0$ and $b=c=0$---\emph{almost $\eta$-Einstein} and \emph{almost Einstein}, respectively.

\begin{theorem}\label{thm:aEl}
A $G(\F_0)$-manifold $(M,\f,\bar\xi,\bar\eta,\bar g)$ with an almost Riemann soliton $(\bg,\bvt,\bsm)$ and a vertical potential is an almost Einstein-like manifold if and only if the condition 
$\bar k=\lm e^w$ ($\lm=const$) is satisfied on $\mathcal{H}$.

The almost Einstein-like manifold $(M,\f,\bar\xi,\bar\eta,\bar g)$ has the following Ricci tensor
\begin{gather}\label{bro-aEl}
\begin{array}{l}
  \bar \rho = -\left[2n \,\bar \sm+\D\bar k(\bar\xi)+(4n-1) \bar k\D{u}(\bar\xi)\right]\bg
                -(2n-1)\bar k\D{v}(\bar\xi)\,\tilde\bg
  \\[4pt]
    \phantom{\bar \rho =}
    -(2n-1)\left\{\D\bar k(\bar\xi)-\bar k\left[\D{u}(\bar\xi)+\D{v}(\bar\xi)\right]
    \right\}\bar\eta\otimes\bar\eta,\\[4pt]
\end{array}
\end{gather}
and the expressions of the scalar curvatures are the same as in \eqref{btau} and \eqref{btau*}.
\end{theorem}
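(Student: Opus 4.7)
The plan is to read off the obstruction to the almost Einstein-like condition directly from formula \eqref{bro} in Theorem~\ref{thm:R-GF0-aRs}. Among the four summands of $\bar\rho$ there, the three involving $\bg$, $\tilde\bg$, and $\bar\eta\otimes\bar\eta$ are already of the required shape $a\,\bg+b\,\tilde\bg+c\,\bar\eta\otimes\bar\eta$, so the decisive question is whether the residual tensor $\bar h_1+\bar k\,\bar h_2$ can be absorbed into the $\bar\eta\otimes\bar\eta$ coefficient.

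To settle this, I would inspect $\bar h_1+\bar k\,\bar h_2$ on the decomposition $TM=\mathcal H\oplus\mathcal V$. From \eqref{h1}, \eqref{h2}, together with $\bar\eta|_{\mathcal H}=0$ and $\bar\eta\circ\f=0$, this tensor vanishes on $\mathcal H\times\mathcal H$. On the cross piece $\mathcal H\times\mathcal V$, plugging $\bar\xi$ into the second slot and using $\f^2 x=-x$ for $x\in\mathcal H$ yields
\begin{equation*}
(\bar h_1+\bar k\,\bar h_2)(x,\bar\xi)=\D\bar k(x)-\bar k\,\D w(x).
\end{equation*}
Since $\bg$, $\tilde\bg$, and $\bar\eta\otimes\bar\eta$ each vanish on $\mathcal H\times\mathcal V$, the almost Einstein-like condition forces this cross term to be zero on $\mathcal H$, i.e.\ $\D(\bar k\,e^{-w})|_{\mathcal H}=0$, which is precisely the stated condition $\bar k=\lm e^w$ on $\mathcal H$ with $\lm$ constant along $\mathcal H$. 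Conversely, under this condition the cross term disappears, so $\bar h_1+\bar k\,\bar h_2$ is supported on $\mathcal V\times\mathcal V$.

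To obtain \eqref{bro-aEl}, I would then evaluate at $(\bar\xi,\bar\xi)$, using $\bar h_1(\bar\xi,\bar\xi)=2\D\bar k(\bar\xi)$ and $\bar h_2(\bar\xi,\bar\xi)=0$, to conclude that in the almost Einstein-like case
\begin{equation*}
\bar h_1+\bar k\,\bar h_2 = 2\D\bar k(\bar\xi)\,\bar\eta\otimes\bar\eta.
\end{equation*}
Substituting this into \eqref{bro} and combining the two $\bar\eta\otimes\bar\eta$ contributions gives \eqref{bro-aEl} directly. The expressions \eqref{btau} and \eqref{btau*} for $\bar\tau$ and $\bar\tau^*$ are automatically preserved because the replacement of $\bar h_1+\bar k\,\bar h_2$ by $2\D\bar k(\bar\xi)\,\bar\eta\otimes\bar\eta$ is trace-neutral: both tensors have $\bg$-trace equal to $2\D\bar k(\bar\xi)$ and $*$-trace equal to $0$, the latter because every relevant contraction involves $\f^k_j\bar\eta_k=(\bar\eta\circ\f)_j=0$.

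The only real delicacy is the reading of ``$\bar k=\lm e^w$ on $\mathcal H$''; the computation above shows this must be interpreted as the horizontal-derivative condition $\D\bar k(x)=\bar k\,\D w(x)$ for every $x\in\mathcal H$, so that the ``constant'' $\lm=\bar k e^{-w}$ is constant along the contact distribution but may still vary in the $\bar\xi$-direction. Once this point is fixed, the proof reduces to the two algebraic verifications above using the data already assembled in Theorem~\ref{thm:R-GF0-aRs}.
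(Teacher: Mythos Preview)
Your proposal is correct and follows essentially the same route as the paper: both identify from \eqref{bro} that the almost Einstein-like condition is equivalent to $\bar h_1+\bar k\,\bar h_2$ being a function multiple of $\bar\eta\otimes\bar\eta$, extract the horizontal condition $\D\bar k=\bar k\,\D w$ on $\mathcal H$ (the paper phrases this as the 1-form equation $\D\bar k+\bar k\,\D w\circ\f^2=f\,\bar\eta$ and then composes with $\f$, while you work directly with the $\mathcal H\times\mathcal V$ block), and then evaluate at $(\bar\xi,\bar\xi)$ to obtain \eqref{h1h2} and hence \eqref{bro-aEl}. Your closing remark that $\lm=\bar k\,e^{-w}$ is constant only along $\mathcal H$ is the correct reading of the statement and matches the paper's qualifier ``restricted on $\mathcal H$''.
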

\begin{proof}
Bearing in mind \eqref{bro} and the definition of almost Einstein-like accR manifold, we conclude that the considered manifold is almost Einstein-like if and only if $\bar h_1+\bar k\,\bar h_2$ is a function multiple of $\bar\eta\otimes\bar\eta$. Therefore, we have the following condition due to \eqref{h1} and \eqref{h2}
\begin{equation}\label{dkdw}
\D \bar k+ \bar k\D w\circ \f^2 =f\,\bar\eta,
\end{equation}
where $f$ is an arbitrary function on the manifold.
%
An immediate consequence of \eqref{dkdw} for $\bar\xi$ is $f=\D \bar k(\bar \xi)$. Then, we obtain
\begin{equation}\label{h1h2}
\bar h_1+\bar k\,\bar h_2=2\D \bar k(\bar \xi)\bar\eta\otimes\bar\eta,
\end{equation}
and in particular for $f=0$ the function $\bar k$ is a vertical constant.

Applying $\f$ to the argument of \eqref{dkdw}, we get the following consequence
\[
\D \bar k\circ\f - \bar k\D w\circ\f =0.
\]
Since $\bar k$ is not zero, the last equation has the following solution
$\bar k =\lm e^w$ restricted on $\mathcal{H}$, where $\lm$ is an arbitrary constant.

The formula in \eqref{bro-aEl} follows from \eqref{bro} and \eqref{h1h2}. It implies the same expressions of $\bar\tau$ and $\bar\tau^*$ as in \eqref{btau} and \eqref{btau*}.
\end{proof}

\begin{theorem}\label{thm:aEl-eEl-El}
Let $(M,\f,\bar\xi,\bar\eta,\bar g)$ be a $G(\F_0)$-manifold  with an almost Riemann soliton $(\bg,\bvt,\bsm)$ and a vertical potential. Then $(M,\f,\bar\xi,\bar\eta,\bar g)$ is:
\begin{enumerate}
  \item[(i)] an almost $\eta$-Einstein manifold if and only if $v$ is a vertical constant, \ie $\D{v}(\bar\xi)=0$;
  \item[(ii)] an almost Einstein manifold if and only if $v$ is a vertical constant and the condition
        $\bar k=\mu e^{u}$ ($\mu=const$) is satisfied on $\mathcal{V}$.
\end{enumerate}
\end{theorem}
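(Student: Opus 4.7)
The approach is to start from the Ricci-tensor formula \eqref{bro-aEl} of \thmref{thm:aEl} and read it as a linear decomposition of $\bar\rho$ along the three pointwise linearly independent symmetric $(0,2)$-tensors $\bg$, $\tilde\bg$, $\bar\eta\otimes\bar\eta$. Almost $\eta$-Einstein and almost Einstein are the successive specialisations of almost Einstein-like in which the $\tilde\bg$-coefficient and then the $\bar\eta\otimes\bar\eta$-coefficient of \eqref{bro-aEl} must vanish, so the proof reduces to setting those coefficients to zero and solving for the relevant data.

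For part (i), the coefficient of $\tilde\bg$ in \eqref{bro-aEl} is $-(2n-1)\bar k\,\D v(\bar\xi)$. Since $2n-1\neq 0$ and $\bar k$ is nowhere zero by the non-degeneracy of the potential $\bvt=\bar k\,\bar\xi$, this coefficient vanishes identically if and only if $\D v(\bar\xi)=0$. Conversely, once this holds, \eqref{bro-aEl} reduces to a combination of $\bg$ and $\bar\eta\otimes\bar\eta$ only, which is the defining property of an almost $\eta$-Einstein manifold.

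For part (ii), besides $\D v(\bar\xi)=0$ from (i), we must also annihilate the $\bar\eta\otimes\bar\eta$-coefficient in \eqref{bro-aEl}. After substituting $\D v(\bar\xi)=0$, that coefficient reduces to $-(2n-1)\{\D\bar k(\bar\xi)-\bar k\,\D u(\bar\xi)\}$; requiring its vanishing and dividing by $\bar k\neq 0$ yields
\[
\D(\ln|\bar k|)(\bar\xi)=\D u(\bar\xi),\qquad \text{equivalently}\qquad \D\bigl(\ln|\bar k|-u\bigr)(\bar\xi)=0.
\]
Integrating along integral curves of $\bar\xi$ gives $\ln|\bar k|-u=\ln\mu$ with $\mu$ a constant, which rearranges to $\bar k=\mu e^{u}$ on $\mathcal V=\Span\bar\xi$, as claimed.

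The only routine technical point is the pointwise linear independence of $\bg$, $\tilde\bg$ and $\bar\eta\otimes\bar\eta$, which is what legitimises the coefficient comparison. This follows from the algebraic identities \eqref{strM}: on pairs from $\mathcal H$ the tensor $\bar\eta\otimes\bar\eta$ vanishes while $\bg$ and $\tilde\bg$ remain independent because $\f|_{\mathcal H}$ is a non-trivial almost complex structure, and evaluation on $(\bar\xi,\bar\xi)$ isolates the $\bar\eta\otimes\bar\eta$ direction. Hence no real obstacle arises, and the proof is essentially an algebraic unwrapping of \thmref{thm:aEl}.
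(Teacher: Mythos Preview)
Your proposal is correct and follows essentially the same approach as the paper's proof: the paper simply says the statements ``are easily derived by considering the particular cases of \eqref{defEl} that are reflected in \eqref{bro-aEl}'' and that the condition in (ii) ``is a solution of $\D{\bar k}(\bar\xi)=\bar k\D{u}(\bar\xi)$.'' You have merely made explicit the coefficient comparison (with the justification of linear independence of $\bg,\tilde\bg,\bar\eta\otimes\bar\eta$) and carried out the elementary integration that the paper leaves implicit.
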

\begin{proof}
The statements in (i) and (ii) are easily derived by considering the particular cases of \eqref{defEl} that are reflected in \eqref{bro-aEl}. The equality in (ii) is a solution of $\D{\bar k}(\bar\xi)=\bar k\D{u}(\bar\xi)$.
\end{proof}

\begin{corollary}\label{cor:aEl-eEl-El}
Let $(M,\f,\bar\xi,\bar\eta,\bar g)$ be a $G(\F_0)$-manifold  with an almost Riemann soliton $(\bg,\bvt,\bsm)$ and a vertical potential. Then the Ricci tensor and the scalar curvatures are the following when $(M,\f,\bar\xi,\bar\eta,\bar g)$ is:
\begin{enumerate}
  \item[(i)] an almost $\eta$-Einstein manifold:
\begin{equation*}\label{bro-aeE}
    \begin{array}{l}
  \bar \rho = -\left[2n \,\bar \sm+\D\bar k(\bar\xi)+(4n-1) \bar k\D{u}(\bar\xi)\right]\bg
    -(2n-1)\left[\D\bar k(\bar\xi)-\bar k\D{u}(\bar\xi)
    \right]\bar\eta\otimes\bar\eta,
\end{array}
\end{equation*}
$\bar\tau$ as in \eqref{btau} and $\bar{\tau}^*=0$.
  \item[(ii)] an almost Einstein manifold:
\begin{equation*}\label{bro-aE}
    \begin{array}{l}
  \bar \rho = -2n\left[\bar \sm+2\bar k\D{u}(\bar\xi)\right]\bg,\qquad
  \bar\tau=-2n (2n+1)\left[\bar \sm +2\bar k\D{u}(\bar\xi)\right],\qquad \bar\tau^*=0.
\end{array}
\end{equation*}
\end{enumerate}
\end{corollary}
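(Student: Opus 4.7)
The plan is to derive both cases by direct substitution of the characterizing conditions from \thmref{thm:aEl-eEl-El} into the general almost Einstein-like Ricci formula \eqref{bro-aEl} and the scalar curvature expressions \eqref{btau} and \eqref{btau*}. Since \thmref{thm:aEl-eEl-El} already identifies the almost $\eta$-Einstein and almost Einstein cases as particular instances of almost Einstein-like $G(\F_0)$-manifolds with vertical-potential almost Riemann soliton, the Ricci tensor formula \eqref{bro-aEl} is valid in both cases, and I only need to simplify it under the extra constraints.

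For case (i), I would impose $\D{v}(\bar\xi)=0$ in \eqref{bro-aEl}. This eliminates the term proportional to $\tilde\bg$ and collapses the coefficient of $\bar\eta\otimes\bar\eta$ to $-(2n-1)\left[\D\bar k(\bar\xi)-\bar k\D{u}(\bar\xi)\right]$, yielding exactly the claimed expression for $\bar\rho$. The formula for $\bar\tau$ from \eqref{btau} is unchanged since it does not involve $\D v(\bar\xi)$, while $\bar\tau^*$ from \eqref{btau*} vanishes immediately, being a constant multiple of $\D v(\bar\xi)$.

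For case (ii), on top of $\D{v}(\bar\xi)=0$ I would use the relation $\bar k=\mu e^u$ on $\mathcal{V}$, which differentiated along $\bar\xi$ yields $\D{\bar k}(\bar\xi)=\bar k\D{u}(\bar\xi)$. Substituting this into the $\bar\eta\otimes\bar\eta$-coefficient of the case (i) formula makes it vanish, and the $\bg$-coefficient simplifies as
\begin{equation*}
  -\bigl[2n\,\bar\sm+\D\bar k(\bar\xi)+(4n-1)\bar k\D{u}(\bar\xi)\bigr]
  =-2n\bigl[\bar\sm+2\bar k\D{u}(\bar\xi)\bigr].
\end{equation*}
Taking the trace with respect to $\bg$ gives the stated $\bar\tau$, and the vanishing of $\bar\tau^*$ is inherited from case (i).

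Since the statement is a straightforward specialization of \thmref{thm:aEl} and \thmref{thm:aEl-eEl-El}, no substantial obstacle is expected; the only care needed is in bookkeeping the coefficients and in observing that the condition $\bar k=\mu e^u$ on $\mathcal{V}$ is equivalent to $\D{\bar k}(\bar\xi)=\bar k\D{u}(\bar\xi)$, which is precisely what is required to kill the remaining $\bar\eta\otimes\bar\eta$-term in case (i).
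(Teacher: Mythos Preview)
Your proposal is correct and follows exactly the approach the paper implicitly takes: the corollary is stated without its own proof, being an immediate specialization of \eqref{bro-aEl}, \eqref{btau}, \eqref{btau*} under the conditions supplied by \thmref{thm:aEl-eEl-El}. Your bookkeeping of the coefficients and the use of $\D{\bar k}(\bar\xi)=\bar k\,\D{u}(\bar\xi)$ (equivalent to $\bar k=\mu e^{u}$ on $\mathcal{V}$, as noted in the proof of \thmref{thm:aEl-eEl-El}) are precisely what is needed.
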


\begin{corollary}\label{cor:aRs=El}
A $G(\F_0)$-manifold $(M,\f,\bar\xi,\bar\eta,\bar g)$ with an almost Riemann soliton $(\bg,\bvt,\bsm)$ and a vertical potential is an Einstein-like manifold if and only if the functions $\bar \sm$, $\bar k$, $u$ and $v$ satisfy the following conditions
\begin{equation}\label{usl1}
  \bar \sm +2\bar k\D{u}(\bar\xi)=const,\qquad
  \D{\bar k}(\bar\xi)-\bar k\D{u}(\bar\xi)=const,\qquad
                      \bar k\D{v}(\bar\xi)=const.
\end{equation}
Moreover, $(\bg,\bvt,\bsm)$ is a Riemann soliton with a vertical potential on the $\eta$-Einstein manifold if and only if
\end{corollary}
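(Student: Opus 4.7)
The plan is to start from the characterization already supplied by Theorem~\ref{thm:aEl}: the $G(\F_0)$-manifold is almost Einstein-like exactly when $\bar k=\lm e^w$ on $\mathcal{H}$, and in that case the Ricci tensor has the explicit form \eqref{bro-aEl}. An Einstein-like manifold is, by definition \eqref{defEl}, an almost Einstein-like manifold whose coefficient functions $a$, $b$, $c$ are constants. So the proof reduces to reading off these three coefficients from \eqref{bro-aEl} and expressing the condition ``each of them is a constant'' as the system \eqref{usl1}.

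First I would identify
\begin{equation*}
    a=-\bigl[2n\bar\sm+\D\bar k(\bar\xi)+(4n-1)\bar k\D u(\bar\xi)\bigr],\quad
    b=-(2n-1)\bar k\D v(\bar\xi),\quad
    c=-(2n-1)\bigl\{\D\bar k(\bar\xi)-\bar k[\D u(\bar\xi)+\D v(\bar\xi)]\bigr\},
\end{equation*}
so that Einstein-like is equivalent to requiring $a,b,c$ to be constants. The condition $b=const$ is, up to the nonzero factor $-(2n-1)$, exactly the third equality in \eqref{usl1}. Using $b=const$, the coefficient $c$ is constant if and only if $\D\bar k(\bar\xi)-\bar k\D u(\bar\xi)=const$, which is the second equality in \eqref{usl1}. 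Finally, subtracting this expression for $c$ from $a$ cancels $\D\bar k(\bar\xi)$ and the $\bar k\D u(\bar\xi)$ contribution collapses to $-4n\bar k\D u(\bar\xi)$, leaving $a-c=-2n[\bar\sm+2\bar k\D u(\bar\xi)]+const$; hence with the second condition in hand, $a=const$ is equivalent to the first equality in \eqref{usl1}.

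For the converse direction I would simply reverse the same linear combinations: assuming \eqref{usl1}, the expressions for $b$ and $c$ become constants immediately, and the identity $a=c-2n[\bar\sm+2\bar k\D u(\bar\xi)]$ (plus a constant) forces $a=const$ as well. This yields the Einstein-like property. The ``moreover'' clause about the Riemann (not merely almost Riemann) soliton case with $\eta$-Einstein structure follows by combining \eqref{usl1} with the additional demand that $\bar\sm$ itself be a constant, and the vertical-constancy of $v$ from Theorem~\ref{thm:aEl-eEl-El}(i); substituting $\D v(\bar\xi)=0$ into the third condition of \eqref{usl1} makes it vacuous, while the remaining two conditions combine to give a relation between $\bar k$ and $u$ alone on $\mathcal{V}$.

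There is no serious obstacle here because all the geometric content has already been packaged into Theorem~\ref{thm:aEl}; the only thing to watch is the bookkeeping of ``$const$'' between the three scalar equations, in particular not conflating the three distinct constants that appear as values of $-a$, $-b/(2n-1)$, $-c/(2n-1)$. Writing the equivalence as two separate implications and carrying out the additions and subtractions above in a fixed order keeps this transparent.
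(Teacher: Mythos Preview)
Your proposal is correct and follows exactly the same route as the paper: both start from the Ricci tensor in the form \eqref{bro-aEl} (supplied by \thmref{thm:aEl}), read off the three coefficients in front of $\bg$, $\tilde\bg$, $\bar\eta\otimes\bar\eta$, and observe that requiring them to be constants is equivalent, via elementary linear combinations, to the three equalities in \eqref{usl1}. The paper compresses this into the single sentence ``This system of equations is equivalent to the equations in \eqref{usl1}'', while you spell out the substitutions; the ``moreover'' clause is likewise handled identically by adjoining $\bar\sm=const$.
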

\begin{proof}
The considered manifold is Einstein-like if and only if the three coefficients of $(0,2)$-tensors in \eqref{bro-aEl} are constants. This system of equations is equivalent to the equations in \eqref{usl1}.
The case for the Riemann soliton follows from $\bar\sm=const$ and \eqref{usl1}.
\end{proof}


\subsection{Example of an $\F_5$-manifold of dimension 5}\label{ex-4.1}

A trivial example of an $\F_0$-manifold $\left(\R^{2n+1}, \f, \xi, \eta, g\right)$ of arbitrary dimension is given in \cite{GaMiGr}.
An accR structure is defined in the space
$\mathbb{R}^{2n+1}=\left\{\left(x^1,\dots,x^n;y^1,\dots,y^n;t\right)\right\}$ 
by the following way
\begin{equation*}\label{ex-str}
\begin{array}{c}
\f\ddx{i}=\ddy{i},\qquad \f\ddy{i}=-\ddx{i},\qquad \f\ddt=0,\qquad \xi=\ddt, \qquad \eta=\D{t},\\[4pt]
g(z,z)=-\delta_{ij} \lm^i\lm^j+\delta_{ij} \mu^i\mu^j+\nu^2,
\end{array}
\end{equation*}
where $z=\lm^i\ddx{i}+\mu^i\ddy{i}+\nu\ddt$ and $\delta_{ij}$ is the Kronecker delta.

In \cite{Man6} (see also \cite[Example 5, p.~105]{ManA}, we give an example of a pair of functions $(u,v)$ on $\R^{2n+1}$ as for dimension 5, \ie $n=2$, it can be written as follows
\begin{equation}\label{ex2-uv}
u=\ln\sqrt{\frac{|t|}{\left(x^1+y^2\right)^2+\left(x^2-y^1\right)^2}},\qquad
v=\arctan \frac{x^1+y^2}{x^2-y^1},
\end{equation}
where $x^1+y^2\neq 0$, $x^2-y^1\neq 0$ and $t\neq 0$.

It is shown that $(u,v)$ satisfy the condition $\D{v}=-\D{u}\circ\f$, because of which the $G$-trans\-formation determined by $(u,v,w=0)$ deforms the given $\F_0$-man\-i\-fold into an $\F_5$-manifold $\left(\R^{2n+1}, \f, \xi, \eta, \bg\right)$ defined by
$
\bar F(x,y,z)=-\frac{1}{4}\bar\ta^*(\xi)\left\{\bg(x,\f y)\eta(z)+\bg(x,\f z)\eta(y)\right\},$
where
$\bar\ta^*(\xi)=4\D{u}(\xi)=\frac{2}{t}$. Its curvature tensor, the scalar curvature
and the $*$-scalar curvature are given in the form
\begin{equation}\label{Rtau}
\bar R= \frac{\bar\tau}{32}\left\{\bg\owedge\bg-\bg\owedge\left(\eta\otimes\eta\right)\right\},\qquad
\bar\tau=-\frac{8}{t^2},\qquad \bar\tau^*=0.
\end{equation}

Now, let us introduce an almost Riemann soliton $(\bg,\bvt,\bsm)$ with vertical potential $\bvt=\bar k\eta$ on $\left(\R^{2n+1}, \f, \xi, \eta, \bg\right)$ assuming that we have the following functions
\begin{equation}\label{smk}
\bsm =\frac{1}{3t^2},\qquad \bar k=-\frac{1}{6t},
\end{equation}
which determine the soliton.

Bearing in mind \thmref{thm:R-GF0-aRs}, we check the expression of $\bar R$ in \eqref{bR}.
Using \eqref{ex2-uv} and \eqref{smk}, we compute successively $\bar h_2=0$ due to $w=0$, $\bar h_1=-\frac{1}{3t^2}\eta\otimes\eta$,
$\D{u}(\xi)=\frac{1}{2t}$, $\D{v}(\xi)=0$, and obtain  for the coefficients in \eqref{bR} the following
\begin{equation*}\label{koef-bR-exF5}
\begin{array}{l}
-\left[\frac{\bar \sm}{2} +\bar k\D{u}(\bar\xi)\right]=-\frac{1}{4t^2},\qquad
 -\bar k\D{v}(\bar\xi)=0,\qquad
 \bar k\left[\D{u}(\bar\xi)+\D{v}(\bar\xi)\right]=\frac{1}{12t^2}.
\end{array}
\end{equation*}
Then \eqref{bR} takes the form
\begin{equation}\label{bR-exF5}
  \bar R= -\frac{1}{4t^2}\left\{\bg\owedge \bg
-\bg\owedge\left(\eta\otimes\eta\right)\right\},
\end{equation}
which agree with \eqref{Rtau}. Thus, we verify \thmref{thm:R-GF0-aRs} and \corref{cor:v}.

Using \eqref{bR-exF5}, we get the following consequences
\[
\bar\rho = -\frac{1}{4t^2}\left\{7\bg -3\eta\otimes\eta\right\},\qquad
\bar\tau = -\frac{8}{t^2},\qquad \bar\tau^*=0.
\]
We thus conclude that the constructed manifold has negative scalar curvature and zero $*$-scalar curvature, and that it is almost $\eta$-Einstein (a particular case of almost Einstein-like), which is not almost Einstein.
These results support \corref{cor:v}, \thmref{thm:aEl},
\thmref{thm:aEl-eEl-El}(i) and \corref{cor:aEl-eEl-El}(i).

In addition, we can calculate the scalar curvature $\tilde{\bar\tau}$ with respect to $\tbg$.
In \cite{Man3} (see also \cite[Corollary 2.4, p.~38]{ManA}),
the relation of this quantity to the $*$-scalar curvature is expressed.
For the case under consideration, the given formula can be read in the following way
$
\tilde{\bar\tau} = -\bar\tau^* -\frac{5}{4}\left(\ta^*(\xi)\right)^2 - 2 \xi\left(\ta^*(\xi)\right).
$
Using that $\ta^*(\xi)=\frac{2}{t}$ in the present example, we get $\tilde{\bar\tau} = -\frac{1}{t^2}$, \ie it is also negative as $\bar\tau$.


\section{$\F_0$-Manifolds that are $G(\F_0)$-Manifolds and Admit the Studied Solitons
}

In this section, we consider an $\F_0$-manifold $\M$, \ie  $F=0$. Moreover, the resulting manifold   $(M,\f,\bar\xi,\bar\eta,\bar g)$ by a $G$-transformation is again in $\F_0$, \ie  $\bar F=0$.

To ensure that both considered manifolds are in $\F_0$, the transformation between them must be of a subgroup  $G_0$ of the group $G$ and defined by the following conditions \cite{Man4}
\begin{equation}\label{G0}
  \D{u}\circ \f = \D{v}\circ \f^2,\qquad  \D{u}(\xi)=\D{v}(\xi)=\D{w}\circ \f =0.
\end{equation}

    In this case, the relationship between the curvature tensors $R$ and $\bar R$ for $g$ and $\bar g$, respectively, is known from \cite[p.~83]{ManA} (see also \cite{Man4} and \cite{ManGri2}) and can be written as follows
\begin{equation}\label{RbarR-F0}
  \bar R = R -g\owedge S + g^* \owedge S^* + \left(\eta\otimes \eta\right)\owedge S,
\end{equation}
where $S^*=S(\cdot,\f\cdot)$ and
\begin{equation}\label{S-F0}
\begin{array}{l}
S = \n \D{u} +\D{u}\otimes\D{u}+\left(\D{u}\circ\f\right) \otimes\left(\D{u}\circ\f\right)
    +\frac12 \D{u}(\grad{u})[g-\eta\otimes \eta]
\\[4pt]
\phantom{S = \n \D{u} +\D{u}\otimes\D{u}+\left(\D{u}\circ\f\right) \otimes\left(\D{u}\circ\f\right)}
    -\frac12 \D{u}(\f\grad{u})[\g-\eta\otimes \eta].
\end{array}
\end{equation}

\begin{theorem}\label{thm:F0F0-tau}
Let $\M$ and its image via a $G_0$-transformation $(M,\f,\bar\xi,\bar\eta,\bar g)$ be $\F_0$-manifolds.
Then the corresponding scalar curvatures for the pair of B-metrics satisfy the relations
\begin{equation}\label{btt*-G0}
\begin{array}{l}
  \bar\tau = e^{-4u}\cos{4v} \left\{\tau - 4(n-1)\tr{S}\right\}
             +e^{-4u}\sin{4v} \left\{\tilde{\tau} + 4(n-1)\tr{S^*}\right\},\\[4pt]
  \tilde{\bar{\tau}} = e^{-4u}\cos{4v} \left\{\tilde{\tau} + 4(n-1)\tr{S^*}\right\}
             -e^{-4u}\sin{4v} \left\{\tau - 4(n-1)\tr{S}\right\},
\end{array}
\end{equation}
where
\begin{equation}\label{trStrS*}
\begin{array}{l}
  \tr S = \delta(\D{u})+2n\D{u}(\grad{u}),\qquad
  \tr S^* = \tilde\delta(\D{u})+2n\D{u}(\f\grad{u})
\end{array}
\end{equation}
for $\delta(\D{u})= g^{ij}\left(\n\D{u}\right)_{ij}$ and $\tilde\delta(\D{u})= \g^{ij}\left(\n\D{u}\right)_{ij}$.
\end{theorem}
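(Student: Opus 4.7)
The plan is to take the double $\bar g^{-1}$-contraction of the fundamental curvature identity \eqref{RbarR-F0} and to organize the resulting terms using the $\F_0$- and $G_0$-structure. The first step is the inversion of the transformed metric \eqref{cct}: the identity $(\cos 2v\cdot I+\sin 2v\cdot\f)(\cos 2v\cdot I-\sin 2v\cdot\f)=I$ on $\mathcal{H}$ yields
\[
\bar g^{ij}=e^{-2u}\cos 2v\,g^{ij}-e^{-2u}\sin 2v\,\psi^{ij}+(e^{-2w}-e^{-2u}\cos 2v)\,\xi^i\xi^j,
\]
where $\psi^{ij}=g^{ik}\f^j_k$ is a purely horizontal $(2,0)$-tensor. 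Since both source and target manifolds belong to $\F_0$, the identity $\tilde{\bar\tau}=-\bar\tau^*$ from \cite{Man3} reduces the second formula in \eqref{btt*-G0} to the companion computation of $\bar\tau^*=\bar g^{ij}\f^k_j\bar\rho_{ik}$.

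Two structural facts drive the simplification. First, on an $\F_0$-manifold, \eqref{K-F0} combined with $\n\xi=0$ yields the K\"ahler-type identity together with the vanishing $R(\cdot,\cdot,\cdot,\xi)=0$; direct consequences are $\psi^{ij}R(e_i,y,z,e_j)=\rho^*(y,z)$ and $\psi^{ij}\rho^*(e_i,e_j)=-\tau$. Second, the $G_0$-conditions \eqref{G0}, particularly $\D u(\xi)=\D v(\xi)=0$, imply after inspection of \eqref{S-F0} that $S(\xi,\cdot)=0$, so the $\xi^i\xi^j$ component of $\bar g^{-1}$ contributes non-trivially only when paired with $\eta\otimes\eta$.

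Applying the $\bar g^{-1}\otimes\bar g^{-1}$-contraction term by term to \eqref{RbarR-F0}, the $R$-summand produces
\[
\bar g^{ij}\bar g^{kl}R_{ikjl}=e^{-4u}\bigl[(\cos^2 2v-\sin^2 2v)\tau-2\sin 2v\cos 2v\,\tau^*\bigr]=e^{-4u}(\cos 4v\,\tau+\sin 4v\,\tilde\tau),
\]
after using the double-angle identities and $\tilde\tau=-\tau^*$. For each Kulkarni--Nomizu summand $A\owedge B$ I substitute into
\[
\bar g^{ij}\bar g^{kl}(A\owedge B)_{ikjl}=2\,\tr_{\bar g}A\cdot\tr_{\bar g}B-2\,\langle A,B\rangle_{\bar g}
\]
and evaluate the relevant traces such as $\tr_{\bar g}g=2ne^{-2u}\cos 2v+e^{-2w}$, $\tr_{\bar g}g^*=2ne^{-2u}\sin 2v$, $\tr_{\bar g}(\eta\otimes\eta)=e^{-2w}$, $\tr_{\bar g}S=e^{-2u}(\cos 2v\,\tr S-\sin 2v\,\tr S^*)$ and $\tr_{\bar g}S^*=e^{-2u}(\cos 2v\,\tr S^*+\sin 2v\,\tr S)$, together with the analogous computation of each inner product. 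Once every contribution is collected, the $e^{-2w}$-terms arising from $\tr_{\bar g}g\cdot\tr_{\bar g}S$ and from $\tr_{\bar g}(\eta\otimes\eta)\cdot\tr_{\bar g}S$ cancel against each other; the residual $e^{-4u}$-pieces reorganize via $\cos^2 2v-\sin^2 2v=\cos 4v$ and $2\sin 2v\cos 2v=\sin 4v$ into the coefficient structure announced in \eqref{btt*-G0}. The formula for $\tilde{\bar\tau}$ is then obtained from the analogous computation for $\bar\tau^*$, in which one $\bar g^{-1}$-contraction is replaced by its $\f$-twisted version, followed by $\tilde{\bar\tau}=-\bar\tau^*$.

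The main technical obstacle is the bookkeeping for the three Kulkarni--Nomizu contractions: each double trace expands into many pieces once the three components of $\bar g^{-1}$ are distributed over both slots, and the emergence of the precise coefficient $-4(n-1)$ of $\tr S$ rests on a delicate cancellation between the trace-of-trace and the inner-product contributions, in which a $(2n+1)$-term from the $g$-trace interacts with a $-2$-term arising in $\langle g,S\rangle_{\bar g}$.
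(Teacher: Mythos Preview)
Your approach is correct and coincides with the paper's own proof, which simply states that \eqref{btt*-G0} follows from \eqref{RbarR-F0} and \eqref{S-F0} ``by lengthy but standard calculations''; you have supplied precisely the outline of those calculations (inversion of $\bar g$, double contraction of each Kulkarni--Nomizu term, and the use of the $\F_0$-identities $R(\cdot,\cdot,\cdot,\xi)=0$, $\tilde\tau=-\tau^*$, $S(\xi,\cdot)=0$). The only thing to add is that the trace formulas \eqref{trStrS*} are part of the statement and are obtained directly by tracing \eqref{S-F0} with $g^{ij}$ and $\tilde g^{ij}$, which you do not mention explicitly but is an immediate side computation.
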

\begin{proof}
Using \eqref{RbarR-F0} with \eqref{S-F0} for the corresponding curvature tensors $R$ and $\bar R$, by lengthy but standard calculations, we obtain expressions for the corresponding scalar curvatures given in \eqref{btt*-G0}.
\end{proof}

Obviously, the involved in \eqref{trStrS*} trace $\delta(\D{u})$ is actually the Laplacian of $u$ for $g$, usually denoted by $\Delta{u}$ or $\n^2{u}$, while $\tilde\delta(\D{u})$ is some kind of associated quantity of $\Delta u$ using $\g$.

%

\begin{corollary}\label{cor:aRsF0-F0}
Let $(\g,\bvt,\bsm)$ be an almost Riemann soliton with vertical potential on $(M,\f,\bar\xi,\bar\eta,\bar g)$ and let the requirements of \thmref{thm:F0F0-tau} be fulfilled.
Then $\M$ has constant scalar curvatures for both B-metrics $g$ and $\g$.
\end{corollary}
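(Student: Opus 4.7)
The plan has two main movements: first, use \thmref{thm:aRs-F0} to collapse the curvature of the image manifold, and then invert the scalar-curvature relations of \thmref{thm:F0F0-tau} to pin down $\tau$ and $\tilde\tau$.

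For the first movement, note that $(M,\f,\bar\xi,\bar\eta,\bar g)$ is an $\F_0$-manifold by the hypothesis of \thmref{thm:F0F0-tau} and admits an almost Riemann soliton with vertical potential by the corollary's assumption. Hence \thmref{thm:aRs-F0} forces $\bar R = 0$, so $\bar\rho = 0$, and in particular $\bar\tau = \bar\tau^* = 0$. The identity $\tilde{\bar\tau} = -\bar\tau^*$ valid on $\F_0$-manifolds (recalled in Section~4) then gives $\tilde{\bar\tau} = 0$ as well.

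For the second movement, substitute $\bar\tau = \tilde{\bar\tau} = 0$ into \eqref{btt*-G0}. This produces a homogeneous linear system in the combinations $A = \tau - 4(n-1)\tr S$ and $B = \tilde\tau + 4(n-1)\tr S^*$. Up to the nonvanishing factor $e^{-4u}$, the coefficient matrix has entries $\cos 4v$ on the diagonal and $\pm\sin 4v$ off-diagonal, hence determinant $1$. Therefore $A = B = 0$, giving $\tau = 4(n-1)\tr S$ and $\tilde\tau = -4(n-1)\tr S^*$.

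The last step---establishing that these functions are constant on $M$---is the main obstacle. The intended route is to combine the explicit form $R = (g - \eta\otimes\eta)\owedge S - g^* \owedge S^*$, obtained by substituting $\bar R = 0$ into \eqref{RbarR-F0}, with the contracted second Bianchi identity $\Div \rho = \tfrac{1}{2}\D\tau$ and the $G_0$-constraints \eqref{G0}. Using the cosymplectic identities on $\M$, one hopes to reduce $\D\tau$ and $\D\tilde\tau$ to divergence expressions that vanish; the bookkeeping of derivatives of $u$ and $v$ constrained by the conditions in \eqref{G0} promises to be the most delicate part of the computation.
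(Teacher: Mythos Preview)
Your first two movements match the paper almost exactly: flatness of $(M,\f,\bar\xi,\bar\eta,\bar g)$ via \thmref{thm:aRs-F0}, then the vanishing of $\bar\tau$ and $\tilde{\bar\tau}$ fed into \eqref{btt*-G0} to extract $\tau=4(n-1)\tr S$ and $\tilde\tau=-4(n-1)\tr S^*$. (A minor remark: once $\bar R=0$ you get $\tilde{\bar\tau}=0$ directly; the detour through $\bar\tau^*$ is unnecessary.)

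The divergence is in the constancy step, which you correctly flag as the real issue but leave unfinished. The paper does \emph{not} attack this via the contracted Bianchi identity and the $G_0$-constraints \eqref{G0}. Instead it appeals to a standing result about $\F_0$-manifolds (Theorem~5.2 in \cite{ManA}): on any such manifold the functions $\arctan\dfrac{\tilde\tau}{\tau}$ and $\ln\sqrt{\tau^2+\tilde\tau^2}$ are constant, which forces $\tau$ and $\tilde\tau$ themselves to be constant. So the paper's argument is a one-line citation rather than a computation, and the formulas \eqref{tau-F0F0RS} are recorded only to describe which constants arise, not to prove constancy.

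Your proposed Bianchi route is not obviously wrong, but it is not carried out, and the bookkeeping you anticipate (mixing $\n\D u$, the $\f$-holomorphicity conditions in \eqref{G0}, and the K\"ahler property \eqref{K-F0}) is genuinely nontrivial; there is no guarantee it closes up without further structural input. As it stands, your write-up has a gap precisely at the point the corollary is meant to deliver. If you want a self-contained argument, you would need either to reproduce the cited $\F_0$-result or to complete the divergence computation explicitly.
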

\begin{proof}
According to \thmref{thm:aRs-F0},  $(M,\f,\bar\xi,\bar\eta,\bar g)$ is flat, \ie $\bar R=0$ and therefore we have $\bar{\tau}=\tilde{\bar{\tau}}=\bsm=\D{\bar k}=0$.
Substituting the last equalities into \eqref{btt*-G0} and considering \eqref{trStrS*}, we get
\begin{equation}\label{tau-F0F0RS}
\begin{array}{l}
  \tau = 4(n-1)\Bigl\{\delta(\D{u})+2n\D{u}(\grad{u})\Bigr\},\\[4pt]
  \tilde{\tau} =
  -4(n-1)\left\{\tilde\delta(\D{u})+2n\D{u}(\f\grad{u})\right\}.
\end{array}
\end{equation}

In this way, we obtain the conditions that the scalar curvatures of an $\F_0$-manifold must satisfy in order to be mapped by a $G_0$-transformation to an $\F_0$-manifold admitting an almost Riemann soliton under study.

As a consequence of Theorem 5.2 in \cite[p.~81]{ManA}, we deduce for an $\F_0$-manifold that
the functions
$\arctan\frac{\tilde{\tau}}{\tau}$ and 
$\ln\sqrt{\tau^2+\tilde{\tau}^2}$ are constants, which implies that
$\tau$ and $\tilde{\tau}$ are constants.
\end{proof}

As is well known, the Bochner curvature tensor $B$ on a K\"ahler manifold can be considered in some sense as an analogue of the Weyl curvature tensor, and the vanishing of $B$ has remarkable geometric interpretations.

In \cite{ManGri2}, the \emph{Bochner curvature tensor of $\f$-holomorphic type} for a curvature tensor with K\"ahler property is introduced on an arbitrary accR manifold of dimension at least 7, \ie $n\geq 3$.  The Riemannian curvature tensor $R$ of an $\F_0$-manifold has the K\"ahler property
\eqref{K-F0} and the definition of the Bochner curvature tensor $B(R)$ as a tensor of type $(0,4)$ corresponding to $R$ can be written in the following form
\begin{equation}\label{BR}
\begin{array}{l}
  B(R)=R-\frac{1}{2(n-2)}\left\{g\owedge \rho -g^*\owedge \rho^*-(\eta\otimes\eta)\owedge \rho  \right\}\\[4pt]
  \phantom{B(R)=R}
  +\frac{1}{8(n-1)(n-2)}\left\{\tau \left[g\owedge g -g^*\owedge g^*-2(\eta\otimes\eta)\owedge g\right]\right.\\[4pt]
  \phantom{B(R)=R+\frac{1}{8(n-1)(n-2)}\left\{\right.}\left.
                                +2\tilde{\tau} \left[g\owedge g^*-(\eta\otimes\eta)\owedge g^*\right] \right\},
\end{array}
\end{equation}
where $\rho^*=\rho(\cdot,\f \cdot)$.

\begin{corollary}\label{cor:aRsF0-F0-LS}
Let $(\g,\bvt,\bsm)$ be an almost Riemann soliton with vertical potential on $(M,\f,\bar\xi,\bar\eta,\bar g)$ of dimension at least 7
and let the requirements of \thmref{thm:F0F0-tau} be fulfilled.
Then the Ricci tensor of $\M$ has the following form
\begin{equation}\label{SQ-F0B=0}
\begin{array}{l}
\rho = -2(n-2) S
+\frac{1}{4(n-1)}\left\{\tau\left[ g-  \eta\otimes \eta\right]
+\tilde{{\tau}}\left[\tilde{{g}}-  \eta\otimes \eta\right]\right\},
\end{array}
\end{equation}
where $S$ is determined by \eqref{S-F0}. Moreover, $\tr S$, $\tr S^*$, $\tau$ and $\tilde{{\tau}}$ are constants.
\end{corollary}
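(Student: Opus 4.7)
The plan is to exploit two independent pieces of information about $R$: the explicit transformation law \eqref{RbarR-F0} under $G_0$, and the vanishing of the Bochner curvature tensor $B(R)$, which I will establish first. The constancy statements will then follow almost for free.

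First I would invoke \thmref{thm:aRs-F0} applied to the image manifold $(M,\f,\bar\xi,\bar\eta,\bg)$, which is itself an $\F_0$-manifold carrying an almost Riemann soliton with vertical potential; this forces $\bar R=0$, and in particular $B(\bar R)=0$. Because the Bochner curvature tensor of $\f$-holomorphic type on $\F_0$-manifolds of dimension at least seven is invariant under the subgroup $G_0$ (this is the reason the hypothesis $n\geq 3$ enters, and the property on which the proof rests), I conclude that $B(R)=0$ on $\M$ as well.

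Next I would combine two expressions for $R$. From $\bar R=0$ substituted into \eqref{RbarR-F0} one has
\[
R = g\owedge S - g^*\owedge S^* - (\eta\otimes\eta)\owedge S,
\]
while $B(R)=0$ inserted into \eqref{BR} rewrites $R$ in terms of $\rho$, $\rho^*=\rho(\cdot,\f\cdot)$, $\tau$ and $\tilde\tau$. Taking the $g$-trace of the first displayed identity, and using the K\"ahler property \eqref{K-F0} together with the standard contraction rules for the Kulkarni--Nomizu product applied to the symmetric $(0,2)$-tensors $g$, $g^*$, $\eta\otimes\eta$, $S$, $S^*$, yields a linear combination of $S$, $\tr S$, $S^*$, $\tr S^*$, $g$, $g^*$, $\eta\otimes\eta$ equal to $\rho$. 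Feeding this expression (and its $\f$-analogue for $\rho^*$) back into the Bochner identity and simplifying with the help of \eqref{trStrS*} should collapse all terms to the asserted form \eqref{SQ-F0B=0}. The main technical obstacle will be the bookkeeping of the horizontal versus vertical components of $S$ when the $\owedge$-products involving $\eta\otimes\eta$ are contracted, but because $\f$, $\xi$, $\eta$ are $\n$-parallel on an $\F_0$-manifold, no genuine curvature identities are required beyond the algebraic symmetries of $R$ and the $(0,2)$-tensors above.

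Finally, the constancy of the four scalars is immediate: \corref{cor:aRsF0-F0} has already shown that $\tau$ and $\tilde{\tau}$ are constants on $\M$, and then the identities in \eqref{tau-F0F0RS} combined with \eqref{trStrS*} force $\tr S$ and $\tr S^*$ to be constants as well.
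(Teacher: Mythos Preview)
Your strategy coincides with the paper's: apply \thmref{thm:aRs-F0} to obtain $\bar R=0$, invoke the $G_0$-invariance of the Bochner tensor to get $B(R)=0$, and then combine the two resulting expressions for $R$ coming from \eqref{RbarR-F0} and from \eqref{BR}. The constancy argument via \corref{cor:aRsF0-F0} and \eqref{tau-F0F0RS} is also exactly what the paper does.

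The only difference is in how the two expressions for $R$ are merged. You propose to trace $R=g\owedge S-g^*\owedge S^*-(\eta\otimes\eta)\owedge S$ to express $\rho$ through $S$, $S^*$ and their traces, and then substitute into the Bochner relation. The paper avoids this computation: using that $\rho$ is hybrid on an $\F_0$-manifold, it rewrites the $B(R)=0$ identity \eqref{R-BR=0} in exactly the same structural shape $R=-g\owedge L+g^*\owedge L^*+(\eta\otimes\eta)\owedge L$, with $L$ given by \eqref{L}, and then simply reads off $S=-L$ by comparison with \eqref{Rbar0R-F0}. Solving $S=-L$ for $\rho$ immediately yields \eqref{SQ-F0B=0}. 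Your route works but leaves the elimination of the $S^*$-terms as nontrivial bookkeeping; the paper's matching-of-forms shortcut bypasses that entirely and makes transparent why the Bochner identity (rather than a mere trace of \eqref{Rbar0R-F0}) is what produces the clean formula.
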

\begin{proof}
It is known from \cite{Man4}
that $B(R)$ on an $\F_0$-manifold is a contact conformal invariant of the group $G_0$, \ie $B(\bar R)=B(R)$.

For the considered manifold $(M,\f,\bar\xi,\bar\eta,\bar g)$ we obtained $\bar R=0$.
Hence $B(\bar R)$ also vanishes and this means
$B(R)=0$ for $\M$. Then, due to \eqref{BR}, we obtain an expression of $R$ as follows
\begin{equation}\label{R-BR=0}
\begin{array}{l}
  R=\frac{1}{2(n-2)}\left\{g\owedge \rho -g^*\owedge \rho^*-(\eta\otimes\eta)\owedge \rho  \right\}\\[4pt]
  \phantom{R=}
  -\frac{1}{8(n-1)(n-2)}\left\{\tau \left[g\owedge g -g^*\owedge g^*-2(\eta\otimes\eta)\owedge g\right]\right.
\left.
                                +2\tilde{\tau} \left[g\owedge g^*-(\eta\otimes\eta)\owedge g^*\right] \right\}
\end{array}
\end{equation}
where $\tau$ and $\tilde{\tau}$ are constants, according to \corref{cor:aRsF0-F0} and have values given in \eqref{tau-F0F0RS}.

Using that the Ricci tensor is hybrid with respect to $\f$, \ie $\rho=-\rho(\f\cdot,\f\cdot)$, on an $\F_0$-manifold, we can rewrite \eqref{R-BR=0} in the following more compact form
\begin{equation}\label{L-RbarR-F0}
   R = - g\owedge  L +  g^* \owedge  L^* + \left( \eta\otimes \eta\right)\owedge  L,
\end{equation}
where $ L$ is defined by
\begin{equation}\label{L}
 L=\frac{1}{2(n-2)}\rho
-\frac{1}{8(n-1)(n-2)}\left\{\tau\left[ g-  \eta\otimes \eta\right]
+\tilde{{\tau}}\left[\tilde{{g}}-  \eta\otimes \eta\right]
\right\}.
\end{equation}

The vanishing of $\bar R$ and \eqref{RbarR-F0} imply the following
\begin{equation}\label{Rbar0R-F0}
  R = g\owedge S - g^* \owedge S^* - \left(\eta\otimes \eta\right)\owedge S.
\end{equation}
Comparing \eqref{Rbar0R-F0} with \eqref{L-RbarR-F0}, we deduce that $S=-L$ and consequently \eqref{SQ-F0B=0} holds.
Equalities \eqref{trStrS*} and \eqref{tau-F0F0RS} imply that  $\tr S$ and $\tr S^*$ are also constants like $\tau$ and $\tilde{{\tau}}$.
\end{proof}


\subsection{Example of an $\F_0$-manifold of arbitrary dimension}

Let us consider again the $\F_0$-manifold $\left(\R^{2n+1}, \A\f, \A\xi, \A\eta, g\right)$ that was described at the beginning of \S\ref{ex-4.1}.

In \cite{ManGri1}, the following example of a pair of functions $(u,v)$ on an accR manifold is given
\begin{equation}\label{ex-uv}
u=\sum_{i=1}^n{\ln\sqrt{\left(x^i\right)^2+\left(y^i\right)^2}},\qquad
v=\sum_{i=1}^n \arctan \frac{y^i}{x^i}.
\end{equation}
It is shown that $(u,v)$ is a $\f$-holomorphic pair of functions, \ie the conditions for them in \eqref{G0} are satisfied.

Let $w$ be the function $e^t$. Then, we have $\D{w}=w\eta$, which implies $\D{w}\circ\f=0$. As a result, \eqref{G0} holds and $(u,v,w)$ determine a contact conformal transformation from $G_0$. This transformation deforms $\left(\R^{2n+1}, \f, \xi, \eta, g\right)$ into 
$(\R^{2n+1}, \f, \bar\xi, \bar\eta, \bg)$, which is again an $\F_0$-man\-i\-fold.

Bearing in mind \eqref{RbarR-F0} and the fact that $\left(\R^{2n+1}, \f, \xi, \eta, g\right)$ is flat, we obtain the curvature tensor of the resulting manifold in the form
\begin{equation*}\label{ex-RbarR-F0}
  \bar R = -g\owedge S + g^* \owedge S^* + \left(\eta\otimes \eta\right)\owedge S,
\end{equation*}
where $S$ is denoted in \eqref{S-F0} and here $u$ is given in \eqref{ex-uv}.
Then, we compute the scalar curvatures and they have the following values
\begin{equation*}\label{ex-btt*-G0}
\begin{array}{l}
  \bar\tau = 4(n-1)e^{-4u}\left\{\sin{4v} \tr{S^*}-\cos{4v} \tr{S}\right\},\\[4pt]
  \tilde{\bar{\tau}} =  4(n-1)e^{-4u} \left\{\cos{4v}\tr{S^*}+\sin{4v}\tr{S}\right\},
\end{array}
\end{equation*}
where $u$ and $v$ are given in \eqref{ex-uv}, and
\begin{equation}\label{ex-trStrS*}
\begin{split}
  \tr S &= -2(n-1)\sum_{i=1}^n \frac{\left(x^i\right)^2-\left(y^i\right)^2}{\left[\left(x^i\right)^2-\left(y^i\right)^2\right]^2},\qquad 
  \tr S^* = -4(n-1)\sum_{i=1}^n \frac{x^iy^i}{\left[\left(x^i\right)^2-\left(y^i\right)^2\right]^2}.
\end{split}
\end{equation}
This result supports \thmref{thm:F0F0-tau}.

Bearing in mind \eqref{ex-trStrS*}, we obtain vanishing scalar curvatures $\bar\tau$ and $\tilde{\bar{\tau}}$ for $n=1$, \ie $(\R^{3}, \A\f, \A\bar\xi, \A\bar\eta, \A\bg)$ is scalar-flat.

%
%

Since the two considered $\F_0$-manifolds are related by a transformation from $G_0$,
$(\R^{2n+1}, \A\f, \A\xi, \A\eta, g)$ is flat and the Bochner curvature tensor is an invariant of $G_0$ for dimension at least 7, we
deduce that $B(\bar R)=0$.

Then, bearing in mind \eqref{BR}, the curvature tensor $\bar R$ has an expression corresponding to \eqref{L-RbarR-F0} with  \eqref{L}, namely
\begin{gather}\nonumber
   \bar R = - \bar g\owedge  \bar L +  \bar g^* \owedge  \bar L^* + \left( \bar \eta\otimes \bar \eta\right)\owedge \bar  L,
\\[4pt]\nonumber
 \bar L=\frac{1}{2(n-2)}\bar \rho
-\frac{1}{8(n-1)(n-2)}\left\{\bar \tau\left[\bar  g-  \bar \eta\otimes \bar \eta\right]
+\tilde{\bar {\tau}}\left[\tilde{\bar {g}}-  \bar \eta\otimes\bar  \eta\right]
\right\}.
\end{gather}

Let us recall \cite{Man4}, if $\ell$ is a $G$-transformation determined by \eqref{cct} for functions $(u,v,w)$,
then its inverse transformation $\ell^{-1}$ is the $G$-transformation determined for the functions $(-u,-v,-w)$.
Then, the present example is in unison with \corref{cor:aRsF0-F0} and \corref{cor:aRsF0-F0-LS}.


\section*{Conclusions}

Steady-state solutions of geometric flows, including almost Riemann solitons, are still the subject of intense research and interest in differential geometry.
The present paper introduced almost Riemann solitons on almost contact complex Riemannian manifolds and achieved first results in the study the coexistence of these structures on an odd-dimensional manifold.
More precisely, the most important curvature properties of manifolds obtained from manifolds of cosymplectic type by conformal transformations of the considered structures were described.
Since the study of Riemann solitons is still in its beginning, any contribution in this direction may bring new perspectives on the geometry of the manifold.


\vspace{6pt}


\begin{thebibliography}{99}

\bibitem{BiChDe22}
Biswas, G.G.;  Chen, X.; De, U.C.
Riemann solitons on almost co-K\"ahler manifolds.
{\em Filomat}  {\bf 2022}, {\em 36}, 
1403--1413.


\bibitem{Bl21}
Blaga, A.-M.
Remarks on almost Riemann solitons with gradient or torse-forming vector field.
{\em Bull. Malays. Math. Sci. Soc. (2)}
{\bf 2021}, {\em 44}, 18.

\bibitem{Bl22}
Blaga, A.-M.
Geometric solitons in a $\mathcal{D}$-homothetically deformed Kenmotsu manifold.
{\em Filomat}
{\bf 2022}, {\em 36}, 175--186.


\bibitem{Bl20a}
Blaga, A.-M.; La\c{t}cu, D.R.
A note on Riemann and Ricci solitons in $(\al,\bt)$-contact metric manifolds.
{\em arXiv} {\bf 2021}, 2009.02506.


\bibitem{Blair76}
Blair, D.E.
\textit{Contact Manifolds in Riemannian Geometry}, 2nd ed.; 
Springer-Verlag: Berlin, Heidelberg, Germany, 1976.

\bibitem{Bulut19}
Bulut, \c{S}.
$\mathcal{D}$-Homothetic deformation on almost contact B-metric manifolds.
{\em J. Geom.}
{\bf 2019}, {\em 110}, 23.



\bibitem{ChiVen21}
Chidananda, S.; Venkatesha, V.
Riemann soliton on non-Sasakian $(\kappa, \mu)$-contact manifolds.
{\em Differ. Geom. Dyn. Syst.}
{\bf 2021}, {\em 23}, 40--51.

\bibitem{DeDe22}
De, K.; De, U.C.
A note on almost Riemann soliton and gradient almost Riemann soliton.
{\em Afr. Mat.}
{\bf 2022}, {\em 33}, 74.


\bibitem{DevKumVen21}
Devaraja, M.N.; Kumara, H.A.; Venkatesha, V.
Riemannian soliton within the framework of contact geometry.
{\em Quaest. Math.}
{\bf 2021}, {\em 44}, 637--651.

\bibitem{GaMiGr}
Ganchev, G.; Mihova, V.; Gribachev, K.
Almost contact manifolds with B-metric.
{\em Math. Balkanica (N.S.)}
{\bf 1993}, {\em 7}, 261--276.


\bibitem{HirUdr16}
Hiric\u{a}, I.-E.; Udri\c{s}te, C.
Ricci and Riemannian solitons.
{\em Balkan J. Geom. Appl.}
{\bf 2016}, {\em 21}, 35--44.


\bibitem{HM}
Manev,  H.
On the structure tensors of almost contact B-metric manifolds.
{\em Filomat} {\bf 2015}, {\em 29}, 
427--436.


\bibitem{HMMek}
Manev,  H.; Mekerov, D.
Lie groups as 3-dimensional almost contact B-metric manifolds.
{\em J. Geom.} {\bf 2015}, {\em 106}, 229--242.


\bibitem{Man3}
Manev, M.
Properties of curvature tensors on almost contact manifolds with B-metric.
{\em Sci. Works V. Levski High. Mil. Sch.}
{\bf 1993}, {\em 27}, 221--227.


\bibitem{Man4}
Manev, M.
Contactly conformal transformations of general type of almost contact manifolds with B-metric. Applications.
{\em Math. Balkanica (N.S.)}
{\bf 1997}, {\em 11}, 347--357.


\bibitem{Man6}
Manev, M.
Examples of almost contact manifolds with B-met\-ric of some special class\-es (in Bulgarian). In
Mathematics and Education in Mathematics, Vol. 26, Proceedings
of the Twenty-Sixth Spring Conference of the Union of Bulgarian Mathematicians,
Plovdiv, Bulgaria, 1--4 April 1997, 153--160.


\bibitem{ManA}
Manev, M.
On Conformal Geometry of Almost Contact Manifolds with B-Metric (in Bulgarian).
Ph.D. Thesis, University of Plovdiv Paisii Hilendarski, 22 March 1999, 113 pp., doi:10.13140/2.1.2545.8887.

\bibitem{Man62}
Manev, M.
Ricci-like solitons on almost contact B-metric manifolds.
{\em J. Geom. Phys.} 
{\bf 2020}, {\em 154}, 103734.

\bibitem{Man73}
Manev, M.
Yamabe solitons on conformal Sasaki-like almost contact B-metric manifolds.
{\em Math.}
{\bf 2022}, {\em 10}, 658. 


\bibitem{ManGri1}
Manev, M.; Gribachev, K.
Contactly conformal trans\-for\-ma\-tions of almost contact man\-i\-folds with B-metric.
{\em Serdica Math. J.}
{\bf 1993}, {\em 19}, 287--299.


\bibitem{ManGri2}
Manev, M.; Gribachev, K.
Conformally invariant tensors on almost contact manifolds with B-metric.
{\em Serdica Math. J.}
{\bf 1994}, {\em 20}, 133--147.


\bibitem{ManIv38}
Manev, M.; Ivanova, M.
Canonical type connections on almost contact manifold with B-matric.
{\em Ann. Global Anal. Ge\-om.}
{\bf 2013}, {\em 43}, 397--408.


\bibitem{NakGri1}
Nakova, G.; Gribachev, K.
One classification of almost contact man\-i\-folds with B-metric.
{\em Sci. Works V. Levski High. Mil. Sch.}
{\bf 1993}, {\em 27}, 208--214.


\bibitem{NakGri2}
Nakova, G.; Gribachev, K.
Submanifolds of some almost contact manifolds with B-metric with codimension two, I.
{\em Math. Balkanica (N.S.)}
{\bf 1997}, {\em 11}, 255--267.


\bibitem{Tanno68}
Tanno, S.
The topology of contact Riemannian manifold.
{\em Illinois J. Math.}
{\bf 1968}, {\em 12}, 700--717.

\bibitem{ToBaBaMe22}
Tokura, W.; Barboza, M.; Batista, E.; Menezes, I.
Rigidity results for Riemann solitons.
{\em arXiv} {\bf 2022}, 2208.07962.


\bibitem{Udr10}
Udri\c{s}te, C.
Riemann flow and Riemannian wave.
{\em Ann. Univ. Vest, Timi\c{s}oara, Ser. Mat.-Inf.}
{\bf 2010}, {\em 48}, 265--274.

\bibitem{Udr12}
Udri\c{s}te, C.
Riemannian flow and Riemannian wave via bialternate product Riemannian metric.
{\em arXiv} {\bf 2012}, 1112.4279.

\bibitem{VenKumDev20}
Venkatesha, V.; Kumara, H.A.; Devaraja, M.N.
Riemann solitons and almost Riemann solitons on almost Kenmotsu manifolds.
{\em Int. J. Geom. Methods Mod. Phys.}
{\bf 2020}, {\em 17}, 2050105.


\end{thebibliography}
\end{document}